\newtheorem{theorem}[subsection]{Theorem}
\newtheorem{lemma}[subsection]{Lemma}
\newtheorem{claim}[subsection]{Claim}
\newtheorem{question}[subsection]{Question}
\theoremstyle{definition}
\newtheorem{definition}[subsection]{Definition}
\theoremstyle{remark}
\newtheorem*{remark}{Remark}
\newtheorem{example}{Example}
\newcommand{\omegaone}{{\omega_1}}
\newcommand{\ch}{\mathsf{CH}}
\newcommand{\zfc}{\mathsf{ZFC}}
\newcommand{\ma}{\mathsf{MA}}
\newcommand{\diamondplus}{{\lozenge^+}}
\newcommand{\cohengen}[2]{\mathsf{Add}({#1}, {#2})}
\newcommand{\set}[1]{\left\{ {#1} \right\}}
\newcommand{\type}{\mathrm{type}}
\newcommand{\dom}{\mathrm{dom}}
\newcommand{\ot}{\mathrm{ot}}
\newcommand{\depth}{\mathrm{depth}}
\newcommand{\width}{\mathrm{width}}
\newcommand{\pfa}{\mathsf{PFA}}
\renewcommand{\P}{\mathbb{P}}
\newcommand\restr[2]{{
  \left.\kern-\nulldelimiterspace 
  #1 
  \vphantom{\big|} 
  \right|_{#2} 
  }}
\title{Hajnal--Máté graphs, Cohen reals, and disjoint type guessing}
\author{Chris Lambie-Hanson}
\address{Institute of Mathematics of the Czech Academy of Sciences, \v{Z}itná 25, 115 67 Prague 1, Czech Republic}
\email{lambiehanson@math.cas.cz}
\urladdr{https://math.cas.cz/lambiehanson/}
\author{Dávid Uhrik}
\address{Charles University, Faculty of Mathematics and Physics, Department of Algebra, Sokolovská 83, 186 75 Praha 8, Czech Republic}
\address{Institute of Mathematics of the Czech Academy of Sciences, \v{Z}itná 25, 115 67 Prague 1, Czech Republic}
\email{uhrik@math.cas.cz}
\subjclass[2020]{03E05, 03E35, 05C15}
\keywords{Hajnal--Máté graph, Cohen reals, uncountable graph, guessing principles}
\thanks{The first author was supported by GA\v{C}R project 23-04683S, and 
the second author was supported by Charles University Research Center program No.\ UNCE/SCI/022. Both authors were supported by the Academy of Sciences of the Czech Republic (RVO 67985840).}
\begin{document}
\maketitle

\begin{abstract}
A Hajnal--M\'{a}t\'{e} graph is an uncountably chromatic graph on $\omega_1$ satisfying 
a certain natural sparseness condition.
We investigate Hajnal--M\'{a}t\'{e} graphs and generalizations thereof, focusing
on the existence of Hajnal--M\'{a}t\'{e} graphs in models resulting from 
adding a single Cohen real. In particular, answering a question of D\'{a}niel Soukup, we show that such 
models necessarily contain triangle-free Hajnal--M\'{a}t\'{e} graphs. In the process, 
we isolate a weakening of club guessing called \emph{disjoint type guessing} that we 
feel is of interest in its own right. We show that disjoint type guessing is independent of 
$\zfc$ and, if disjoint type guessing holds in the ground model, then the forcing extension 
by a single Cohen real contains Hajnal--M\'{a}t\'{e} graphs $G$ such that the chromatic numbers 
of finite subgraphs of $G$ grow arbitrarily slowly.
\end{abstract}

\section{Introduction}
András Hajnal and Attila Máté, in \cite{hajnalmate1975}, initiated the 
study of a class of graphs on $\omegaone$ which satisfy two properties 
that are in tension with one another. First, they are sparse in the following sense: the set of neighbors of any countable ordinal $\alpha$ restricted to ordinals below $\alpha$ is finite or cofinal in $\alpha$ with order type $\omega$. Second, 
despite this sparseness condition, they have uncountable chromatic number. 
Such graphs are called \emph{Hajnal--M\'{a}t\'{e} graphs} (or \emph{HM graphs}). See Definition \ref{hm_def} below
for a more precise, and more general, definition.

The existence of HM graphs turns out to be independent of $\zfc$. 
The first existence result is due to Hajnal and Máté \cite{hajnalmate1975}. They showed that under $\diamondplus$ an HM graph exists. In the same paper, they showed that under Martin's Axiom ($\ma(\omegaone)$) there are no such graphs.

Since then, many new constructions have been discovered. Let us mention a few. Komjáth, in his series of papers about HM graphs \cite{komjath1980, komjath1984, komjath1989}, showed that one can construct a triangle-free HM graph just from the $\diamondsuit$ principle. From $\diamondplus$, he constructed an HM graph with no \emph{special cycles}, i.e., cycles formed from two monotone paths. Komjáth and Shelah \cite{komjathshelah1988} constructed further examples of HM graphs, both 
through forcing constructions and through the use of $\diamondsuit$. Given a natural number $s$, they constructed an HM graph having no odd cycles of length less than or equal to $2s+1$ for which the complete bipartite graph $K_{\omega, \omega}$ is not a subgraph. Using $\diamondsuit$, Lambie-Hanson and Soukup \cite{lambiehansonsoukup2021} constructed a coloring $c: [\omegaone]^2 \to \omega$ such that $c^{-1}\{n\}$ is a triangle-free HM graph for each $n$.

In this paper, we continue the investigation of HM graphs, focusing in 
particular on the existence of interesting HM graphs 
after adding a single Cohen real to an arbitrary model of set theory.
There is a rich body of work investigating the effect of adding a single Cohen real 
on the existence of combinatorial structures of size $\omega_1$.
The earliest results in this direction are due to Roitman, who showed 
in \cite{roitman1979} that $\ma(\omega_1)$ necessarily fails after adding a single Cohen real. This result was strengthened
 by Shelah, who proved in \cite{shelah1984} that adding a single Cohen real adds a Suslin tree. It was known that a similar construction can be used to construct an HM graph in a model obtained by adding one Cohen real. Dániel Soukup asked \cite[Problem 5.2]{soukuphandout} whether this HM graph can be triangle-free. Here, we provide a positive answer. In particular, in 
Theorem \ref{cohenandhmgraph} we will show that, in the forcing extension 
by a single Cohen real, for every natural number $s$, there is an HM graph 
with no odd cycles of length $2s+1$ or shorter and no special cycles 
(and hence, e.g., no copies of $K_{\omega,\omega}$).

We also investigate a generalization of HM graphs in which the vertex 
set is not necessarily $\omega_1$ but is rather some tree $T$ of height 
$\omega_1$. Such graphs were first considered by Hajnal and Komj\'{a}th 
in \cite{hajnalkomjath1984}. Here, we construct simple $\zfc$ examples of generalized HM graphs possessing the properties discussed in the 
previous paragraph. In particular, for every natural number $s$, we 
construct in $\zfc$ an HM graph on the tree ${^{<\omega_1}}\omega$ that 
has no odd cycles of length $2s+1$ or shorter and no special cycles.

In the process of proving these results, we isolate a weakening of the 
classical \emph{club guessing} principle that we call \emph{disjoint type 
guessing} ($\mathsf{DTG}$), and which seems to be of interest in its own right. Certain weak 
forms of $\mathsf{DTG}$ are provable in $\zfc$ and are crucial ingredients in 
the proofs of the results mentioned in the previous two paragraphs. The full 
$\mathsf{DTG}$ is independent of $\zfc$; for instance, we show in 
Theorem \ref{pfa_thm} that a weak form of the Proper Forcing Axiom 
entails its failure. However, we show that if the ground model satisfies 
$\mathsf{DTG}$, then one can improve Theorem \ref{cohenandhmgraph}. In 
particular, in the extension by a single Cohen real, there exist 
HM graphs for which the chromatic numbers of their finite subgraphs 
grow arbitrarily slowly with respect to the size of their vertex sets 
(see Section \ref{growth_rate_sec} for a more precise formulation). 
Such HM graphs were first constructed through a forcing construction 
by Komj\'{a}th and Shelah in \cite{komjathshelah2005} and from 
$\diamondsuit$ by the first author in \cite{lambiehanson2020b}, 
solving a question of Erd\H{o}s, Hajnal, and Szemer\'{e}di 
\cite{erdos1982}.

The structure of the paper is as follows. In Section \ref{type_sec}, 
we introduce the notion of \emph{type guessing} that will play 
a central role in the proofs of our main theorems, and prove some 
basic facts about it. In Section \ref{main_result_sec}, we prove 
our main result (Theorem \ref{cohenandhmgraph}) about the existence of HM graphs with no short odd cycles and no special cycles in the extension 
by a single Cohen real. We also prove our $\zfc$ result (Theorem 
\ref{thmgraphinzfc}) about the existence of generalized HM graphs 
indexed by trees and having no short odd cycles or special cycles.
In Section \ref{growth_rate_sec}, we prove that if disjoint type guessing 
holds in the ground model, then in the extension by a single Cohen 
real there is an HM graph such that the chromatic numbers of its 
finite subgraphs grow arbitrarily slowly. In Section \ref{pfa_sect}, 
we show that $\mathsf{DTG}$ is independent of $\zfc$ by showing that 
its negation follows from a weakening of the Proper Forcing Axiom. 
Finally, in Section \ref{q_sec}, we conclude with some remaining open 
questions.

\subsection*{Notation}
We use standard set theoretic notation. If $X$ is a set and $\mu$ a cardinal, then $[X]^\mu := \set{Y \subseteq X \mid |Y| = \mu}$. 
If $X$ and $Y$ are subsets of some ordered set, then $X<Y$ means that each element of $X$ lies below each element of $Y$.

We identify each cardinal with the least ordinal of its cardinality, and 
each ordinal is identified with the set of ordinals strictly less 
than it. In particular, $\omega_1$ is the set of all countable ordinals. The class of ordinals is denoted $\mathrm{Ord}$. If $a$ 
is a well-ordered set (e.g., $a \subseteq \mathrm{Ord}$), 
then $\ot(a)$ denotes the order type of $a$.
We will often identify sets of ordinals with the functions giving their 
increasing enumerations. For example, if $a \subseteq \mathrm{Ord}$ and 
$i < \ot(a)$, then $a(i)$ is the unique $\beta \in a$ such that $\ot(a 
\cap \beta) = i$, and, if $I \subseteq \ot(a)$, then $a[I] := \{a(i) \mid 
i \in I\}$. If $a \subseteq \mathrm{Ord}$ and $\beta \in \mathrm{Ord}$, then 
we will write, e.g., $a < \beta$ instead of $a < \{\beta\}$ to denote that 
every element of $a$ is less than $\beta$. If $a$ and $b$ are sets of ordinals, then 
we let $a \sqsubseteq b$ denote the assertion that $a$ is an initial segment of $b$, 
i.e., $a \subseteq b$ and, for all $\beta \in b$, either $\beta \in a$ or 
$a < \beta$. If $\beta \in \mathrm{Ord}$, 
then $\lim(\beta)$ denotes the set of limit ordinals that are less than $\beta$ 
(we adopt the convention that $0$ is not a limit ordinal).

A graph $G$ is a pair $(X,E)$, where $X$ (the set of \emph{vertices}) is an arbitrary set and $E$ (the set of \emph{edges}) is a subset of $[X]^2$.
If $v \in X$, then $\mathrm{N}_G(v)$ denotes the set of neighbors of $v$ 
in $G$, i.e., $\mathrm{N}_G(v) := \{u \in V \mid \{u,v\} \in E\}$. If $V$ is ordered by $\prec$, then $\mathrm{N}_G^\prec(v) := \{ u \prec v \mid \{u,v\} \in E\}$. If the graph $G$ is clear from context, it will be omitted from 
this notation.

If $\mu$ is a cardinal, then a \emph{proper coloring of a graph $G = (X,E)$ with $\mu$ colors} is a function $c:X \to \mu$ such that 
$c(u) \neq c(v)$ whenever $\set{u,v} \in E$. The chromatic number of a graph $G$, denoted $\chi(G)$, is the least cardinal $\mu$ such that there is a proper coloring of $G$ with $\mu$ colors.

Given a graph $G = (X,E)$, a natural number $n\ge3$, and an injective sequence $(v_0, \ldots, v_{n-1})$ of vertices in $G$, we say that $(v_0, \ldots, v_{n-1}, v_0)$ forms a \emph{cycle of length $n$} in $G$ if $\{v_i, v_{i+1}\} \in E$ for each $i<n-2$ and $\{v_{n-1}, v_0\} \in E$. 
An \emph{odd cycle} is simply a cycle whose length is an odd integer. 
Recall that a graph is \emph{bipartite} if and only if it has no odd 
cycles. If $\mu$ and $\nu$ are cardinals, then $K_{\mu, \nu}$ denotes the 
complete bipartite graph with parts of cardinality $\mu$ and $\nu$.

We use $\cohengen{\omega}{1}$ to denote the forcing to add a single Cohen real. The underlying set of $\cohengen{\omega}{1}$ is ${^{<\omega}}\omega$, 
i.e., the collection of all functions from a natural number to $\omega$. 
$\cohengen{\omega}{1}$ is ordered by reverse inclusion. 
If $g$ is a generic filter over $\cohengen{\omega}{1}$, then 
$r = \bigcup g$ is a function from $\omega$ to $\omega$; it is 
referred to as \emph{the Cohen real} in the forcing extension 
$V[g]$. We refer the reader to \cite{kunen1980} for an introduction to forcing and independence proofs in set theory.

\section{Type guessing} \label{type_sec}

In this section, we introduce the basic notions of \emph{disjoint 
types} and \emph{disjoint type guessing}. Informally, disjoint types are finite binary strings coding 
the order relations between two disjoint sets of ordinals of the same size. 
More formally:

\begin{definition}
	Suppose that $n$ is a natural number.
	\begin{enumerate}
		\item A \emph{disjoint type of width $n$} is a function $t : 2n \rightarrow 2$
		such that
		\[
		|\{i < 2n \mid t(i) = 0\}| = |\{i < 2n \mid t(i) = 1\}| = n.
		\]
        The width of a disjoint type $t$ is denoted $\width(t)$.
		\item If $a$ and $b$ are disjoint elements of $[\mathrm{Ord}]^n$, then
		$\type(a,b)$ is the unique disjoint type $t :2n \rightarrow 2$ such that,
		letting $a \cup b = \{\alpha_0, \alpha_1, \ldots, \alpha_{2n-1}\}$,
		enumerated in increasing order, we have $a = \{\alpha_i \mid i < 2n \text{ and } t(n) = 0\}$
		and $b = \{\alpha_i \mid i < 2n \text{ and } t(n) = 1\}$.
        \item Suppose that $t$ is a disjoint type of width $n$ and $a,b \in 
        [\mathrm{Ord}]^n$ are such that $\type(a,b) = t$. The \emph{depth} of $t$, 
        denoted $\depth(t)$ is the least $k < n$ such that either $a < b(k)$ or 
        $b < a(k)$.
        \item If $t$ is a disjoint type of width $n$, we let $\bar{t}$ denote 
        the \emph{opposite} type of $t$, i.e., $\bar{t}:2n \rightarrow 2$ is 
        defined by letting $\bar{t}(i) = 1-t(i)$ for all $i < 2n$. Note that, 
        if $a,b \in [\mathrm{Ord}]^n$, then $\type(a,b) = t$ if and only if 
        $\type(b,a) = \bar{t}$. Note also that $\depth(\bar{t}) = \depth(t)$.
	\end{enumerate}
\end{definition}

We will often represent disjoint types of width $n$ as binary 
strings of length $2n$ in the obvious way. If $t_0$ and $t_1$ are 
two disjoint types of width $n_0$ and $n_1$, respectively, 
then the concatenation $t_0{}^\frown t_1$ is the disjoint type of width 
$n_0 + n_1$ represented by the concatenation of the binary strings 
representing $t_0$ and $t_1$. Formally, $t_0{}^\frown t_1$ is given by 
\[
  t_0{}^\frown t_1(i) = \begin{cases}
      t_0(i) & \text{if } i < 2n_0; \\ 
      t_1(i - 2n_0) & \text{if } 2n_0 \leq i < 2n_0 + 2n_1.
  \end{cases}
\]

We will be particularly interested in the following
family of types, sometimes known as \emph{Specker types}.

\begin{definition}
	Suppose that $s$ and $n$ are natural numbers with $1 \leq s < n$. Then
	$t^n_s$ is the disjoint type of width $n$ defined by letting, for all
	$i < 2n$,
	\[
		t^n_s(i) =
		\begin{cases}
			0 & \text{if } i < s \\
			0 & \text{if } s \leq i < 2n - s \text{ and } i - s \text{ is even} \\
			1 & \text{if } s \leq i < 2n - s \text{ and } i - s \text{ is odd} \\
			1 & \text{if } i \geq 2n - s.
		\end{cases}
	\]
\end{definition}

\begin{example}
  Let us illustrate the above definitions with an example. In its 
  binary string representation, $t^n_s$ is the disjoint type of width 
  $n$ starting with $s$ copies of $0$, followed by $n-s$ copies of $01$, 
  followed by $s$ copies of 
  $1$. For example, $t^5_2$ is represented in this way as $0001010111$. One can 
  see from this example that $\depth(t^5_2) = 2$. In general, $\depth(t^n_s) = n-s-1$.
\end{example}

There is a natural way of forming graphs associated with disjoint types of 
width $n$ in which the vertices are elements of $[\mathrm{Ord}]^n$. We will 
especially be interested in such graphs on the vertex set $[\omega_1]^n$:

\begin{definition}
    Suppose that $t$ is a disjoint type of width $n$. Then $G(t) = ([\omega_1]^n, E(t))$ 
    denotes the graph with vertex set $[\omega_1]^n$ such that, for all $a,b \in 
    [\omega_1]^n$, we put $\{a,b\} \in E(t)$ if and only if $\type(a,b) \in 
    \{t, \bar{t}\}$. Given natural numbers $1 \leq s < n$, we will denote 
    $G(t^n_s)$ by $\mathsf{S}^n_s$. Such graphs are often referred to as \emph{generalized 
    Specker graphs} (the particular graph $\mathsf{S}^3_1$ is sometimes referred to as 
    \emph{the} Specker graph).
\end{definition}

The following theorem is due to Erd\H{o}s and Hajnal.

\begin{theorem}[\cite{erdoshajnal1966}] \label{erdos_hajnal_thm}
  Suppose that $1 \leq s < n$ are natural numbers.
  \begin{enumerate}
      \item $\chi(\mathsf{S}^n_s) = \aleph_1$;
      \item if $n \geq 2s^2 + 1$, then $\mathsf{S}^n_s$ contains no odd cycles of length 
      $2s+1$ or shorter.
  \end{enumerate}
\end{theorem}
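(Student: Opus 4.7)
The plan is to treat the two parts separately. For (1), the bound $\chi(\mathsf{S}^n_s) \leq \aleph_1$ is immediate since $|[\omega_1]^n| = \aleph_1$. For the matching lower bound I would argue by contradiction: suppose $c : [\omega_1]^n \to \omega$ is a proper coloring, and aim to produce monochromatic $a,b \in [\omega_1]^n$ with $\type(a,b) = t^n_s$. First I would pigeonhole on the color to extract an uncountable monochromatic family, and then iteratively thin this family, controlling the relative positions so that the bottom $s$ elements of the eventual $a$ lie strictly below all of $b$, the top $s$ elements of $b$ lie strictly above all of $a$, and the remaining $2(n-s)$ middle coordinates alternate in the required $abab\cdots ab$ fashion. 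A pressing-down argument on the enumeration of the thinned family (or a suitable $\Delta$-system refinement that additionally tracks the order-type of the off-root coordinates relative to a ladder) should supply such a pair. An alternative route is to isolate an uncountably chromatic shift-graph-like subgraph of $\mathsf{S}^n_s$ in the spirit of Erd\H{o}s--Specker.

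For (2), suppose toward a contradiction that $(a_0, a_1, \ldots, a_{2k}, a_0)$ is a cycle in $\mathsf{S}^n_s$ of odd length $2k+1 \leq 2s+1$. Each edge $\{a_i, a_{i+1}\}$ has type $t^n_s$ or $\overline{t^n_s}$, so one endpoint has its bottom $s$ elements strictly below every element of the other, while the other has its top $s$ elements strictly above everything. The plan is to orient each edge according to which endpoint is ``shifted upward'' and to attach to it a signed integer offset describing the displacement of the interleaving middle positions as one passes from $a_i$ to $a_{i+1}$. Walking around the closed cycle, the cumulative signed offset must vanish; on the other hand, the structure of $t^n_s$ forces each edge to contribute an offset of absolute value at most $s$ in its assigned direction, and the odd length $2k+1$ (with $k \leq s$) prevents all of these contributions from cancelling unless there are at least $2s \cdot s + 1 = 2s^2 + 1$ positions available within each $n$-set --- which is precisely the hypothesis $n \geq 2s^2 + 1$.

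The main obstacle is the combinatorial bookkeeping in (2): pinning down the correct notion of ``offset'' attached to each oriented edge, verifying that these offsets sum to zero around the cycle, and extracting the exact threshold $n \geq 2s^2+1$ from the parity obstruction. Part (1) is more routine, but some care is needed to arrange precisely the interleaving $t^n_s$ rather than mere disjointness of $a$ and $b$: the $\Delta$-system lemma alone is not enough, and the alternation in the middle $2(n-s)$ positions has to be engineered explicitly.
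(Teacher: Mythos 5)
A preliminary remark: the paper does not prove this theorem at all --- it is quoted from Erd\H{o}s--Hajnal, and the remark immediately following it points to external sources, noting that item (1) ``will follow from arguments in this paper'' (namely the elementary-submodel argument of Theorem~\ref{zfc_guessing_thm}) and that the available proof of item (2) actually requires $n > 2s^2 + 3s + 1$. Measured against that, your plan for item (1) has a genuine gap at its very first step. Pigeonholing to an uncountable monochromatic family and then thinning cannot work, because $\mathsf{S}^n_s$ has uncountable independent sets: the family of all $n$-sets containing a fixed ordinal is uncountable and pairwise intersecting, hence edge-free (edges require disjointness); and for $s < n-1$ the family $\{\{\omega\cdot\alpha + i \mid i < n\} \mid \alpha < \omega_1\}$ consists of pairwise separated sets whose mutual type is $0^n1^n \notin \{t^n_s, \bar{t}^n_s\}$. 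So the color class handed to you by the pigeonhole may simply contain no edge, and no $\Delta$-system refinement or pressing-down inside it will manufacture a $t^n_s$-pair. The lower bound has to use the coloring globally. The standard route --- the one the paper effectively carries out in Theorem~\ref{zfc_guessing_thm} --- is reflection: take a chain $M_0 \in \cdots \in M_{n-1}$ of countable elementary submodels with $\delta_i = M_i \cap \omega_1$, let $k$ be the color of a set built from the $\delta_i$'s, and pull the statement ``there exist unboundedly many $\gamma_0$, unboundedly many $\gamma_1$, \dots, with $c(\{\gamma_i \mid i<n\}) = k$'' back through the models one quantifier at a time, choosing each $\gamma^*_i$ inside the interval between consecutive $\delta_j$'s dictated by $t^n_s$. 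That is what produces the interleaving; it is not recoverable from cardinality considerations on a single color class.

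For item (2) your idea --- orient each edge by which endpoint is shifted upward and track a displacement around the cycle --- is the right one, but the invariant you propose is off. It is not true that ``the cumulative signed offset must vanish''; rather, if $\type(a,b) = t^n_s$ then $a(s+j) < b(j) < a(s+j+1)$, so traversing an edge strictly \emph{increases} the tracked ordinal value while shifting the tracked index by $-s$ (upward edge) or $+(s+1)$ (downward edge). Around an odd cycle of length $2k+1 \leq 2s+1$ one arranges (reversing the traversal if necessary) that the net index shift is nonpositive, which contradicts the monotonicity of the increasing enumeration of the starting vertex; the hypothesis on $n$ serves only to keep the wandering index inside $[0,n)$ along the way, not to block a cancellation. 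Note also that the proof the paper cites (\cite{lambiehanson2020note}) only achieves the conclusion for $n > 2s^2+3s+1$, and the authors explicitly say the exact threshold is immaterial for their purposes, so expecting your bookkeeping to yield exactly $n \geq 2s^2+1$ is probably asking more of the argument than it gives.
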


\begin{remark}
    A proof of Theorem \ref{erdos_hajnal_thm} is not given in \cite{erdoshajnal1966}. 
    A proof of item (1) can be found in multiple sources, and will also follow from 
    arguments in this paper. A proof of item (2) can be found in \cite{lambiehanson2020note}; 
    we note that the proof given there requires $n > 2s^2 + 3s + 1$, but the precise minimal 
    value for this $n$ will not be important for the results here.
\end{remark}

\begin{definition}
    A \emph{$C$-sequence} is a sequence of the form $\vec{C} = 
    \langle C_\alpha \mid \alpha \in \lim(\omega_1) \rangle$ such that, 
    for all $\alpha \in \lim(\omega_1)$, $C_\alpha$ is a cofinal subset 
    of $\alpha$ and $\ot(C_\alpha) = \omega$.
\end{definition}

\begin{remark}
    In some sources, $C$-sequences are also required to be defined at 
    successor ordinals, with $C_{\alpha + 1} = \{\alpha\}$ for all $\alpha 
    < \omega_1$, but this will not be relevant for us here. 
\end{remark}

We are now ready to introduce the notion of 
\emph{disjoint type guessing}, which will 
play an instrumental role in verifying that the HM graphs we construct later 
in the paper have uncountable chromatic number.

\begin{definition}
    Suppose that $\vec{C} = \langle C_\alpha \mid \alpha \in \lim(\omega_1) \rangle$ is a 
    $C$-sequence and $\vec{t} = \langle t_k \mid k < \omega \rangle$ is a sequence of 
    disjoint types. We say that $\vec{C}$ is a \emph{$\vec{t}$-guessing sequence} if, for 
    every function $f:\omega_1 \rightarrow \omega$, there are $\alpha < \beta$ in 
    $\lim(\omega_1)$ and $k < \omega$ such that
    \begin{enumerate}
        \item $f(\alpha) = f(\beta) = k$; and
        \item letting $n := \width(t_k)$, we have 
        \[
          \type(C_\alpha[n],C_\beta[n]) \in \{t_k, \bar{t}_k\}
        \]
        (in particular, $C_\alpha[n]$ and $C_\beta[n]$ are disjoint).
    \end{enumerate}
    We say that $\vec{C}$ is a \emph{strong} $\vec{t}$-guessing sequence if, 
    for every $f:\omega_1 \rightarrow \omega$, there is $\beta \in \lim(\omega_1)$ and 
    $k = f(\beta)$ such that the set of $\alpha \in \lim(\beta)$ for which $\alpha$ and 
    $\beta$ satisfy (1) and (2) above is unbounded in $\beta$.
    
    We say that $\emph{(strong) disjoint type guessing}$ ($\mathsf{(s)DTG}$) holds if, for every 
    sequence $\vec{t} = \langle t_k \mid k < \omega \rangle$ of disjoint types, 
    there exists a (strong) $\vec{t}$-guessing sequence.
\end{definition}

We will see at the end of this section that $\mathsf{sDTG}$ follows from club 
guessing (and hence also, \emph{a fortiori}, from $\diamondsuit$), 
and we will see in Section \ref{pfa_sect} that $\mathsf{DTG}$ is not a theorem of 
$\mathsf{ZFC}$. Let us show now, though, that $\mathsf{sDTG}$ \emph{is} a theorem 
of $\mathsf{ZFC}$ when restricted to a certain nice class of sequences of types.
Let us say that a sequence $\vec{t} = \langle t_k \mid k < \omega \rangle$ of disjoint 
types has \emph{bounded width} if $\sup\{\width(t_k) \mid k < \omega\} < \omega$. 

\begin{theorem} \label{zfc_guessing_thm}
    There is a $C$-sequence $\vec{C} = \langle C_\alpha \mid \alpha \in \lim(\omega_1) \rangle$
    such that, for every sequence $\vec{t} = \langle t_k \mid k < \omega 
    \rangle$ of disjoint types with bounded width, $\vec{C}$ is a strong 
    $\vec{t}$-guessing sequence. In fact, the following formally stronger 
    statement is true: for every $f : \omega_1 \rightarrow \omega$, there 
    are $\beta \in \lim(\omega_1)$ and $k < \omega$ such that $f(\beta) = 
    k$ and, for every $\eta < \beta$, there is $\alpha \in \lim(\beta)$ 
    such that, letting $n := \width(t_k)$, we have
    \begin{itemize}
        \item $f(\alpha) = k$;
        \item $\type(C_\alpha[n], C_\beta[n]) = t_k$;
        \item $C_\alpha(n) > \eta$.
    \end{itemize}
\end{theorem}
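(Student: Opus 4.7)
Plan: I would construct $\vec{C}$ using a continuous $\in$-chain of countable elementary submodels, then extract witnesses via elementarity together with a bookkeeping/pigeonhole argument exploiting the bounded-width hypothesis.

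Fix a well-ordering $\vartriangleleft$ of $H(\omega_2)$ and a continuous $\in$-chain $\langle M_\xi : \xi < \omega_1 \rangle$ of countable elementary submodels of $(H(\omega_2), \in, \vartriangleleft)$, arranged so that each $\delta_\xi := M_\xi \cap \omega_1$ is an ordinal (i.e., $\delta_\xi \subseteq M_\xi$). Let $D := \{\delta_\xi : \xi < \omega_1\}$, a club. For each limit $\xi < \omega_1$, recursively choose a cofinal $\omega$-sequence $\langle \xi_j : j < \omega\rangle$ in $\xi$ (see below) and set $C_{\delta_\xi} := \{\delta_{\xi_j} : j < \omega\}$; for other $\alpha \in \lim(\omega_1)$, take $C_\alpha$ to be the $\vartriangleleft$-least cofinal $\omega$-sequence in $\alpha$. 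Since $\vec{C}$ is then definable from $\vartriangleleft$, we have $\vec{C} \in M_\xi$ for every $\xi$.

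Given $f : \omega_1 \to \omega$, let $K := \{k : |f^{-1}(k)| = \aleph_1\}$; pigeonhole makes $K$ nonempty, and since $\omega_1 \setminus f^{-1}(K)$ is a countable union of bounded sets, $f^{-1}(K)$ is cocountable. Since $K$ is countable, some $k \in K$ has $f^{-1}(k)$ stationary (else $\omega_1$ would be a countable union of nonstationary sets); fix such a $k$. Pick a limit $\xi$ with $f \in M_\xi$ and $\delta_\xi \in f^{-1}(k)$, then set $\beta := \delta_\xi$, $n := \width(t_k)$, and $\sigma := C_\beta[n] = \{\delta_{\xi_0}, \ldots, \delta_{\xi_{n-1}}\}$. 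Since $\sigma \subseteq \delta_\xi \subseteq M_\xi$, we have $\sigma \in M_\xi$. For any $\eta < \beta$ (so $\eta \in M_\xi$), by elementarity of $M_\xi$ it suffices to verify, in $V$, the existential ``there exists $\alpha \in \lim(\omega_1)$ with $f(\alpha) = k$, $\type(C_\alpha[n], \sigma) = t_k$, and $C_\alpha(n) > \eta$'', since any witness will reflect into $M_\xi \cap \omega_1 = \beta$ and hence lie in $\lim(\beta)$.

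The hard part will be producing such an $\alpha$ in $V$. I would aim for $\alpha = \delta_\zeta$ for a limit $\zeta$ with $f(\delta_\zeta) = k$; since $\xi' \mapsto \delta_{\xi'}$ is order-preserving, the type condition then becomes the combinatorial condition $\type(\{\zeta_j\}_{j<n}, \{\xi_j\}_{j<n}) = t_k$ on the chosen cofinal sequences. I would arrange, during the recursive choice of $\langle \xi_j\rangle$, that for every limit $\xi$, every bounded-width disjoint type $t$, and every $\eta' < \xi$, cofinally many limit $\zeta < \xi$ satisfy $\type(\{\zeta_j\}_{j<n}, \{\xi_j\}_{j<n}) = t$ with $\zeta_n > \eta'$. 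The bounded-width hypothesis is essential: it caps the number of relevant type configurations to finitely many per width $n \leq \sup_k \width(t_k)$, making the required diagonalization a tractable bookkeeping. Combined with the stationarity of $\{\zeta < \omega_1 : f(\delta_\zeta) = k\}$ and standard pressing-down arguments, this yields the desired $\alpha = \delta_\zeta$ for every $\eta$.
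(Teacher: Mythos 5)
There is a genuine gap in the final step, and it is precisely the step you flag as ``the hard part.'' After fixing $k$ with $f^{-1}(k)$ stationary and $\beta = \delta_\xi \in f^{-1}(k)$, you must produce (in $V$) a single $\alpha$ satisfying the \emph{conjunction} $f(\alpha) = k$ \emph{and} $\type(C_\alpha[n], C_\beta[n]) = t_k$ \emph{and} $C_\alpha(n) > \eta$. Your plan supplies these requirements separately but never together: the bookkeeping during the recursive choice of the sequences $\langle \xi_j \rangle$ can only arrange that, for each type $t$, the set of $\zeta < \xi$ with $\type(C_{\delta_\zeta}[n], C_{\delta_\xi}[n]) = t$ is cofinal in $\xi$, because $f$ is handed to you only \emph{after} $\vec{C}$ is fixed. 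A cofinal (or even club) subset of $\xi$ need not meet $\{\zeta < \xi \mid f(\delta_\zeta) = k\}$, and stationarity of $\{\zeta \mid f(\delta_\zeta) = k\}$ in $\omega_1$ says nothing about its trace below the particular $\xi$ you chose. Pressing down does not rescue this: Fodor gives a stationary set of $\zeta \in f^{-1}(k)$ on which $C_{\delta_\zeta}[n]$ is some \emph{constant} finite set $\sigma_0$, but there is no reason $\type(\sigma_0, C_\beta[n]) = t_k$ for your $\beta$, and you cannot re-choose $\beta$ to fix this since $\beta$ is also constrained by $f(\beta) = k$. Your elementarity reduction (``it suffices to find the witness anywhere in $V$'') is sound, modulo the circularity of needing $\vec{C} \in M_\xi$ when $\vec{C}$ is built from the chain $\langle M_\xi \rangle$ itself, but the witness is never actually produced.

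The paper's proof is organized to dissolve exactly this tension. It takes $\vec{C}$ with the property that every finite $b \in [\omega_1]^{<\omega}$ is an initial segment of \emph{stationarily many} $C_\beta$; it builds a tower $M_0 \in \cdots \in M_{n^*}$ of models containing $f$, and chooses $\beta = M^* \cap \omega_1$ so that $C_\beta(i) = \delta_i := M_i \cap \omega_1$, only then setting $k := f(\beta)$ (rather than fixing $k$ in advance by stationarity of a fiber). The true statement ``$f(\beta) = k$ and $\{\delta_i \mid i \le n\} \sqsubseteq C_\beta$,'' witnessed by $\beta = M^* \cap \omega_1$, reflects to give $\exists^\infty \alpha$ satisfying the conjunction of the color condition with the initial-segment condition; iterating this reflection down through $M_n, M_{n-1}, \ldots$ lets one replace $\{\delta_i\}$ by a hand-picked set $\{\gamma^*_i\}$ interleaved with $C_\beta[n]$ in type $t_k$, all while the constraint $f(\alpha) = k$ rides along inside the formula $\varphi$. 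The bounded-width hypothesis is used only to fix the height $n^*$ of the tower before $k$ is known. If you want to repair your argument, you would need to replace your final paragraph with a reflection of this kind; the combinatorial pre-arrangement of types among club points cannot anticipate $f$.
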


\begin{proof}
    Let $\vec{C}$ be any $C$-sequence such that, for every $b \in [\omega_1]^{<\omega}$, 
    there are stationarily many $\beta \in \lim(\omega_1)$ such that $b \sqsubseteq C_\beta$. 
    We claim that $\vec{C}$ is as desired. To this end, fix a sequence $\vec{t} = 
    \langle t_k \mid k < \omega \rangle$ of disjoint types such that $n^* := 
    \sup\{\width(t_k) \mid k < \omega\} < \omega$, and fix a function $f:\omega_1 
    \rightarrow \omega$. Let $\theta$ be a sufficiently 
    large regular cardinal, and let $\langle M_i \mid i \leq n^* \rangle$ be a $\in$-increasing 
    sequence of countable elementary substructures of $(H(\theta), \in, \vec{C}, 
    \vec{t}, f)$. For each $i \leq n^*$, let $\delta_i := M_i \cap \omega_1$. By our 
    choice of $\vec{C}$, we can find a countable elementary substructure 
    $M^* \prec (H(\theta), \in, \vec{C}, \vec{t}, f)$ such that 
    $M_{n^*} \in M^*$ and, letting $\beta = M^* \cap \omega_1$, we have 
    $C_\beta(i) = \delta_i$ for all $i \leq n^*$. Let $k := f(\beta)$ and 
    $n := \width(t_k) \leq n^*$. Assume that $n > 1$; the case in which $n = 1$ is similar 
    but much easier.

    For a formula $\psi$, let us abbreviate by $\exists^\infty\varepsilon : \psi(\varepsilon)$ 
    the formula $\forall \zeta < \omegaone \exists \varepsilon < \omegaone : \zeta < \varepsilon 
    \land \psi(\varepsilon)$. Informally, this is saying that there are unboundedly 
    many $\varepsilon < \omega_1$ for which $\psi(\varepsilon)$ holds. 
    Given a $b \in [\omega_1]^{n+1}$ and an $\alpha < \omega_1$, 
    let $\varphi(b, \alpha)$ be the statement asserting that $\alpha \in \lim(\omega_1)$, 
    $f(\alpha) = k$, and $b \sqsubseteq C_\alpha$. Then $\varphi(\{\delta_i \mid i \leq n\}, \beta)$ 
    holds. In particular, for all $\zeta < \beta$, the statement $\exists \alpha < 
    \omega_1 : \zeta < \alpha \land \varphi(\{\delta_i \mid i \leq n\}, \alpha)$ holds, 
    as witnessed by $\beta$. By elementarity, this statement is true in $M^*$ as well. 
    Since $\beta = M^* \cap \omega_1$, this in fact implies that
    \[
      M^* \models \exists^\infty \alpha : \varphi(\{\delta_i \mid i \leq n\}, \alpha),
    \]
    and hence, by another application of elementarity, this statement is true in $H(\theta)$.

    Now, for all $\zeta < \delta_n$, we have 
    \[
      H(\theta) \models \exists \gamma < \omega_1 : \zeta < \gamma \land (\exists^\infty 
      \alpha : \varphi\{\{\delta_i \mid i < n\} \cup \{\gamma\}, \alpha),
    \]
    as witnessed by $\gamma = \delta_{n}$. By elementarity, this statement is true in 
    $M_{n}$. Since $\delta_{n} = M_{n} \cap \omega_1$, this implies that
    \[
      M_{n} \models \exists^\infty \gamma \exists^\infty \alpha : 
      \varphi(\{\delta_i \mid i < n\} \cup \{\gamma\}, \alpha),
    \]
    so, again by another application of elementarity, this statement is true in $H(\theta)$ 
    as well. Continuing in this way, we see that the following statement holds in $H(\theta)$, 
    and hence also in $M^*$ and $M_i$ for every $i \leq n^*$:
    \[
      \exists^\infty \gamma_0 \exists^\infty \gamma_1 \ldots \exists^\infty \gamma_{n} 
      \exists^\infty \alpha : \varphi(\{\gamma_i \mid i \leq n\}, \alpha).
    \]

    Without loss of generality, assume that $t_k(2n-1) = 1$; the other case 
    is symmetric. For $\ell < 2$, let $e_\ell := t_k^{-1}\{\ell\} \in [2n]^n$, and
    define an auxiliary function $s:n \rightarrow n$ by letting $s(i)$ be the least 
    $j < n$ such that $e_0(i) < e_1(j)$. For notational convenience, let 
    $\delta_{-1} = \gamma^*_{-1} = -1$. We now recursively choose an increasing sequence of 
    ordinals $\langle \gamma^*_i \mid i < n \rangle$ such that, for all 
    $i < n$, we have
    \begin{itemize}
        \item $\delta_{s(i) - 1} < \gamma^*_i < \delta_{s(i)}$;
        \item $\exists^\infty \gamma_{i+1}\ldots \exists^\infty \gamma_{n} 
        \exists^\infty \alpha : \varphi(\{\gamma^*_j \mid j \leq i\} 
        \cup \{\gamma_j \mid i < j \leq n\}, \alpha)$.
    \end{itemize}
    The construction is straightforward: if $i < n$ and we have already 
    chosen $\langle \gamma^*_j \mid j < i \rangle$, then, by our recursion 
    hypothesis, we have 
    \begin{itemize}
        \item $\{\gamma^*_j \mid j < i \} \in M_{s(i)}$; and
        \item $M_{s(i)} \models \exists^\infty \gamma_i\ldots \exists^\infty 
        \gamma_{n} \exists^\infty \alpha : \varphi(\{\gamma^*_j \mid j \leq i\} 
        \cup \{\gamma_j \mid i < j \leq n\}, \alpha)$.
    \end{itemize}
    We can therefore choose a $\gamma^*_i$ witnessing the statement in the second 
    bullet point above satisfying $\max\{\delta_{s(i)-1}, \gamma^*_{i-1}\} < \gamma^*_i 
    < \delta_{s(i)}$. This choice continues to satisfy the recursion hypothesis, 
    so we can then continue to the next step in the construction. At the end, we have arranged so that
    \begin{itemize}
        \item $\type(\{\gamma^*_i \mid i < n\}, \{\delta_i \mid i < n\}) = t_k$; and
        \item $\exists^\infty \gamma_n \exists^\infty \alpha : 
        \varphi(\{\gamma^*_i \mid i < n\} \cup \{\gamma_n\}, \alpha)$.
    \end{itemize}
    These statements hold in $M^*$, and $M^* \cap \omega_1 = \beta$.
    Therefore, for every $\eta < \beta$, we can find $\gamma^*_n > \eta$ and
    $\alpha \in \lim(\beta)$ such that $f(\alpha) = k$ and $C_\alpha(i) = 
    \gamma^*_i$ for all $i \leq n$. Thus, $\beta$ witnesses the conclusion 
    of the theorem.
\end{proof}

Recall that \emph{club guessing} holds if there is a $C$-sequence 
$\vec{C} = \langle C_\alpha \mid \alpha \in \lim(\omega_1) \rangle$ 
such that, for every club $D$ in $\omega_1$, there is 
$\alpha \in \lim(\omega_1)$ for which $C_\alpha \subseteq D$. 
It is evident that club guessing follows from $\diamondsuit$.
We now show that the full $\mathsf{sDTG}$ follows from club guessing, and 
hence from $\diamondsuit$.

\begin{theorem}
    Suppose that $\vec{C} = \langle C_\alpha \mid \alpha \in \lim(\omega_1) \rangle$ is a witness to club guessing. Then $\vec{C}$ is a 
    strong $\vec{t}$-guessing sequence for every sequence $\vec{t} = \langle t_k \mid k 
    < \omega \rangle$ of disjoint types.
\end{theorem}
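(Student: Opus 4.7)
The plan is to run the proof of Theorem \ref{zfc_guessing_thm} almost verbatim, replacing the ``stationarily many $\beta$ with $b \sqsubseteq C_\beta$'' property by club guessing. The core peeling argument through an elementary chain and the combinatorial step that realizes the prescribed type both transfer with essentially no change; what changes is only how the initial witness $\beta$ is obtained.

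Fix $f : \omega_1 \to \omega$ and a sufficiently large regular $\theta$. I would build a continuous $\in$-chain $\langle M_\eta \mid \eta < \omega_1 \rangle$ of countable elementary substructures of $(H(\theta), \in, \vec{C}, \vec{t}, f)$, with $M_\xi \in M_{\xi+1}$ for every $\xi$. Set $\delta_\eta := M_\eta \cap \omega_1$, so that $D := \{\delta_\eta \mid \eta < \omega_1\}$ is a club. Apply club guessing to $D$ to produce $\beta \in \lim(\omega_1)$ with $C_\beta \subseteq D$; since $D$ is closed, $\beta$ itself lies in $D$, say $\beta = \delta_{\eta^*}$. Enumerate $C_\beta = \{\delta_{\eta_i} \mid i < \omega\}$ with $\eta_0 < \eta_1 < \cdots < \eta^*$. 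Then $M^* := M_{\eta^*}$ is a countable elementary substructure with $M^* \cap \omega_1 = \beta$ and $M_{\eta_i} \in M^*$ for every $i$. Let $k := f(\beta)$ and $n := \width(t_k)$; we may assume $n \geq 1$, as the $n = 0$ case follows immediately from the fact that $\beta$ witnesses $\exists \alpha : f(\alpha) = k$ in $H(\theta)$, whence unboundedly many such $\alpha$ exist below $\beta$ by elementarity of $M^*$.

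Now run the peeling argument from the proof of Theorem \ref{zfc_guessing_thm} on the subchain $M_{\eta_0} \prec M_{\eta_1} \prec \cdots \prec M_{\eta_{n-1}} \prec M^*$ in place of $M_0 \prec \cdots \prec M_{n-1} \prec M^*$. With $\varphi(b, \alpha) \equiv \alpha \in \lim(\omega_1) \land f(\alpha) = k \land b \sqsubseteq C_\alpha$, the statement $\varphi(\{\delta_{\eta_0}, \ldots, \delta_{\eta_{n-1}}\}, \beta)$ holds by construction, and peeling off $\delta_{\eta_{n-1}}, \ldots, \delta_{\eta_0}$ in turn via the elementary chain yields, in $H(\theta)$,
\[
  \exists^\infty \gamma_0 \cdots \exists^\infty \gamma_{n-1} \exists^\infty \alpha : \varphi(\{\gamma_i \mid i < n\}, \alpha).
\]
Assuming without loss of generality that $t_k(2n-1) = 1$ and defining the auxiliary $s : n \to n$ exactly as in Theorem \ref{zfc_guessing_thm}, recursively choose $\gamma_0^*, \ldots, \gamma_{n-1}^*$ with $\max\{\delta_{\eta_{s(i)-1}}, \gamma_{i-1}^*\} < \gamma_i^* < \delta_{\eta_{s(i)}}$ so that the residual existential-unboundedness statement survives at each stage. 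This arranges $\type(\{\gamma_i^* \mid i < n\}, C_\beta[n]) = t_k$, and the final $\exists^\infty \alpha : \varphi(\{\gamma_i^* \mid i < n\}, \alpha)$, interpreted inside $M^*$, produces unboundedly many $\alpha \in \lim(\beta)$ with $f(\alpha) = k$ and the prescribed type, which is precisely the strong $\vec{t}$-guessing property at $\beta$.

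The only real conceptual point is why the bounded-width hypothesis from Theorem \ref{zfc_guessing_thm} is no longer needed. There, the finite chain $M_0 \prec \cdots \prec M_{n^*}$ had to be built \emph{before} $\beta$ (and hence $k$) was chosen, so $n^*$ had to bound all the widths. Here the order is reversed: club guessing produces $\beta$ first, which fixes both $k = f(\beta)$ and $n = \width(t_k)$, and only then do we extract the finite elementary subchain $M_{\eta_0} \prec \cdots \prec M_{\eta_{n-1}}$ of length exactly $n$ from the long chain. This is the step I expect to require the most care to articulate cleanly; everything else is a direct translation of the earlier proof.
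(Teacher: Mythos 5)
Your proposal is correct and follows essentially the same route as the paper: build a continuous elementary chain, apply club guessing to the resulting club to obtain $\beta$ with $C_\beta \subseteq D$, take $M^*$ and the models indexed by the elements of $C_\beta$ as the finite subchain, and rerun the peeling argument of Theorem \ref{zfc_guessing_thm}. Your closing observation---that choosing $\beta$ (and hence $k$ and $n$) \emph{before} extracting the finite subchain is exactly what removes the bounded-width hypothesis---is precisely the point of the paper's argument as well.
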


\begin{proof}
    Fix a sequence $\vec{t} = \langle t_k \mid k < \omega \rangle$ of disjoint types and 
    a function $f:\omega_1 \rightarrow \omega$. Let $\theta$ be a sufficiently 
    large regular cardinal, and let $\langle N_\eta 
    \mid \eta < \omega_1 \rangle$ be a $\in$-increasing, continuous sequence of 
    countable elementary substructures of $(H(\theta), \in, \vec{C}, \vec{t}, 
    f)$. Let $D := \{\eta < \omega_1 \mid \eta = N_\eta \cap \omega_1\}$. 
    Then $D$ is a club in $\omega_1$, so we can find $\beta \in \lim(\omega_1)$ such 
    that $C_\beta \subseteq D$. Let $k := f(\beta)$, and let $n := \width(t_k)$. 
    For $i \leq n$, let $\delta_i := C_\beta(i)$ and $M_i := N_{\delta_i}$. 
    Since $\delta_i \in D$, we also have $\delta_i = M_i \cap \omega_1$. 
    Let $M^* := N_\beta$. We are now in precisely the situation from the proof of 
    Theorem \ref{zfc_guessing_thm}, so we can repeat the arguments from that proof 
    to conclude that there are unboundedly many $\alpha \in \lim(\beta)$ such 
    that $f(\alpha) = k$ and $\type(C_\alpha[n],C_\beta[n]) = t_k$. Therefore, 
    $\beta$ witnesses this instance of strong $\vec{t}$-guessing.
\end{proof}

\section{Triangle-free Hajnal--Máté graph from a single Cohen real}
\label{main_result_sec}

We begin this section with a generalized definition of Hajnal--Máté graphs.
Recall that a \emph{tree} is a partial order $(T, <_T)$ 
such that, for every $t \in T$, the set $\mathrm{pred}_T(t) := \{s \in 
T \mid s <_T t\}$ is well-ordered by $<_T$. We will often simply use 
$T$ to denote the tree $(T, <_T)$. For an ordinal $\beta$, the 
\emph{$\beta$-th level of $T$}, denoted $T_\beta$, is the set of all 
$t \in T$ such that $\ot(\mathrm{pred}_T(t), <_T) = \beta$. 
We also let $T_{<\beta} = \bigcup_{\alpha < \beta} T_\alpha$. The 
\emph{height} of $T$ is the least ordinal $\beta$ such 
that $T_\beta = \emptyset$. The \emph{comparability graph} of $T$ 
is the graph $(T,E)$ such that, for all $s,t \in T$, we have 
$\{s,t\} \in E$ if and only if either $s <_T t$ or $t <_T s$. 

\begin{definition} \label{hm_def}
    Let $T$ be a tree of height $\omegaone$. A graph $G = (T, E)$ is called a \emph{$T$-Hajnal--Máté graph} (or \emph{$T$-HM graph}) if it is a subgraph of the comparability graph of $T$, the chromatic number of $G$ is uncountable, and for all $\alpha<\beta<\omegaone$ and $t \in T_\beta$ the set $\{s \in T_{<\alpha} \mid \{s, t\} \in E \}$ is finite.
\end{definition}
\begin{remark}
If $T$ is the tree $(\omegaone, \in)$ we omit the parameter $T$. Thus an $(\omegaone, \in)$-Hajnal--Máté graph will be called a Hajnal--Máté graph, or HM graph for short. Note that the vertex set of these graphs is always a tree, hence ordered.

We also remark that there is some inconsistency in terminology in the extant literature, 
and in some other works Hajnal--M\'{a}t\'{e} graphs 
are not required by definition to be uncountably chromatic. Since the HM graphs of interest are those 
that are uncountably chromatic, we adopt here the convention that HM graphs are 
necessarily uncountably chromatic.
\end{remark}

In this section we give a positive answer to Soukup's question asking whether adding a single Cohen real forces the existence of a triangle-free HM graph. First, we need a few preliminary definitions and lemmas.

In our construction of the HM graph, we will ensure that there is a graph homomorphism from the constructed graph to a suitable $\mathsf{S}^n_s$. The following lemma, together with Theorem \ref{erdos_hajnal_thm}, ensures that the graph will have no short odd cycles. The proof of the following lemma is in \cite[Proposition 2.8.]{lambiehanson2020b}.

\begin{lemma} \label{nooddcyclelemma}
    Suppose $G = (X_G, E_G)$ and $H = (X_H, E_H)$ are graphs and there is a map $f: X_G \to X_H$ such that, for all $x,y \in X_G$, if $\{x,y\} \in E_G$, then $\{f(x), f(y)\} \in E_H$. If $s \in \omega$ and $H$ has no odd cycles of length $2s+1$ or shorter, then $G$ has no odd cycles of length $2s+1$ or shorter.
\end{lemma}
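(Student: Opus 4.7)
The plan is to argue by contradiction. Suppose $G$ contains an odd cycle $(v_0, v_1, \ldots, v_{n-1}, v_0)$ with odd $n \leq 2s+1$. Applying $f$ pointwise yields a sequence $(f(v_0), f(v_1), \ldots, f(v_{n-1}), f(v_0))$ in $H$ which, by the homomorphism hypothesis, is a closed walk of length $n$: each pair $\{f(v_i), f(v_{i+1})\}$ lies in $E_H$, and in particular is a two-element set, so consecutive images are automatically distinct. It then suffices to extract from this closed walk an odd cycle in $H$ of length at most $n$, which will contradict the hypothesis on $H$.

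The main auxiliary fact I would invoke is the standard graph-theoretic lemma that every closed walk of odd length $\ell$ in a graph contains an odd cycle of length at most $\ell$. I would prove it by strong induction on $\ell$, with base case $\ell = 3$ (where the walk is forced to be a triangle, since adjacency rules out repeated consecutive vertices). For the inductive step, take a closed walk $(w_0, w_1, \ldots, w_\ell)$ with $w_0 = w_\ell$. If the only repetition is the endpoint coincidence, the walk is already an odd cycle. Otherwise there exist $0 \leq i < j \leq \ell$ with $(i,j) \neq (0,\ell)$ and $w_i = w_j$; splitting at this repetition yields two shorter closed sub-walks of positive lengths $j - i$ and $\ell - (j-i)$ summing to $\ell$. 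Since $\ell$ is odd, one summand is odd and strictly less than $\ell$, and the inductive hypothesis applies to it.

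Combining the two observations finishes the argument: a putative odd cycle in $G$ of length at most $2s+1$ produces a closed odd walk in $H$ of the same length, which in turn contains an odd cycle in $H$ of length at most $2s+1$, contradicting the assumption on $H$. The only mildly delicate point is the auxiliary lemma on closed odd walks; everything else is a direct unpacking of the homomorphism condition. Crucially, one does not need $f$ to be injective on the cycle — which is exactly what the auxiliary lemma handles.
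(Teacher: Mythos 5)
Your proof is correct. The paper does not give its own argument for this lemma --- it simply cites \cite[Proposition 2.8]{lambiehanson2020b} --- and the argument there is essentially the one you give: the image of an odd cycle under a graph homomorphism is a closed odd walk (consecutive images are distinct because loops are not edges), and the standard induction shows every closed odd walk of length $\ell$ contains an odd cycle of length at most $\ell$. The only point worth making explicit in your induction is that the odd summand produced by splitting at a repeated vertex cannot equal $1$ (that would be a loop), so it is at least $3$ and the inductive hypothesis genuinely applies; this is implicit in your remark about adjacency ruling out repeated consecutive vertices.
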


In the HM graph we construct, we will also forbid cycles formed by two monotone paths.

\begin{definition}
    Suppose $G$ is a graph on $\omegaone$. A cycle $\langle x_0, \ldots, x_{n-1}, x_0\rangle$ is called \emph{special} if there is an $r < n$ such that $x_i > x_{i+1}$ for $i < r$ and $x_i < x_{i+1}$ for $r \le i$. A graph is said to be \emph{special cycle--free} if it contains no special cycles.
\end{definition}

Recall that $H_{\omega, \omega+2}$ is the graph with the vertex set made up of disjoint sets $\{x_i \mid i < \omega\}$ and $\{y_i \mid i < \omega+2\}$, where for each $i < \omega$ and $j < \omega+2$ such that $i\le j$ the vertex $x_i$ is connected to $y_j$. The graph $H_{\omega, \omega+1}$ is the same graph with the vertex $y_{\omega+1}$ omitted. Hajnal and Komjáth \cite[Theorem 1]{hajnalkomjath1984} showed that $H_{\omega, \omega+1}$ is a subgraph of every uncountably chromatic graph.

Note that if $G$ is special cycle--free, it is triangle-free. Additionally, the following lemma says that being special cycle--free also forbids $H_{\omega, \omega+2}$ in HM graphs \cite[Theorem 3]{hajnalkomjath1984}.
In what follows, we say that a tree $T$ of height $\omega_1$ 
\emph{does not split on limit levels} if, for all $\beta \in 
\lim(\omega_1)$ and all $s,t \in T_\beta$, if $\mathrm{pred}_T(s) = 
\mathrm{pred}_T(t)$, then $s = t$.

\begin{lemma}[Hajnal and Komjáth] \label{nokomegaomegainmonotonefree}
    Suppose $T$ is a tree of height $\omegaone$ that does not split on limit levels, and $G$ is a special cycle--free $T$-HM graph. Then $G$ is triangle-free and $H_{\omega, \omega+2}$ is not a subgraph of $G$.
\end{lemma}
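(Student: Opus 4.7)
The triangle-free claim is immediate: any triangle in $G$ has its three vertices pairwise $T$-comparable (since $G$ is a subgraph of the comparability graph of $T$), hence linearly ordered, say $a <_T b <_T c$, and then starting the cycle at $c$ gives the special $3$-cycle $(c, a, b, c)$ with $r=1$, contradicting special cycle-freeness. So the work lies in ruling out $H_{\omega,\omega+2}$. Assume for contradiction that $H_{\omega,\omega+2}$ embeds into $G$, realized by vertices $x_i$ ($i<\omega$) and $y_j$ ($j<\omega+2$). The plan is to exhibit a special $4$-cycle whose four vertices lie on two $T$-chains sharing a common maximum and minimum, typically involving $y_\omega, y_{\omega+1}$ (or some $y_j$ nearby) together with two of the $x_i$'s.

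Split on whether $y_\omega$ and $y_{\omega+1}$ are $T$-comparable. In the \emph{incomparable} case, each $x_i$ is $T$-comparable to both $y_\omega, y_{\omega+1}$, and since predecessor chains in $T$ are linear, any $x_i$ above either $y$ would force both $y$'s into a single predecessor chain; hence every $x_i$ lies in $\mathrm{pred}_T(y_\omega)\cap\mathrm{pred}_T(y_{\omega+1})$. The HM condition applied at $y_\omega$ and at $y_{\omega+1}$ pushes the levels of these (infinitely many) $x_i$'s cofinally into the levels of both $y$'s, forcing $y_\omega$ and $y_{\omega+1}$ to sit at the same limit level $\beta$ of cofinality $\omega$ with $\mathrm{pred}_T(y_\omega)=\mathrm{pred}_T(y_{\omega+1})$. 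The hypothesis that $T$ does not split at limit levels then gives $y_\omega=y_{\omega+1}$, a contradiction.

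In the \emph{comparable} case (WLOG $y_{\omega+1}<_T y_\omega$), HM at $y_\omega$ forces cofinitely many $x_i$'s to satisfy $x_i>_T y_{\omega+1}$, so each such $x_i$ is either strictly between $y_{\omega+1}$ and $y_\omega$ or strictly above $y_\omega$. If infinitely many $x_i$'s lie strictly between, any two yield the special $4$-cycle $(y_\omega, x_i, y_{\omega+1}, x_{i'})$ via two $T$-chains descending from $y_\omega$ to $y_{\omega+1}$. Otherwise $X := \{x_i : x_i>_T y_\omega\}$ is infinite. I first show $X$ is a $T$-chain, since an incomparable pair in $X$ with branching vertex $z$ would force their cofinitely many common $y$-neighbors (from the structure of $H_{\omega,\omega+2}$) to lie at level at most the level of $z$, contradicting HM at one of the $x_i$'s. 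A parallel HM argument (applied to each $x_i\in X$) then shows that the set $Y$ of $y_j$'s with $j<\omega$ and $y_j >_T y_\omega$ is infinite. Enumerating $X$ as $x_{i(0)} <_T x_{i(1)} <_T \cdots$, I split once more: either some $y_{j^*}\in Y$ has two members of $X$ (with indices $\le j^*$) below it, which gives the special $4$-cycle $(y_\omega, x_{i(k)}, y_{j^*}, x_{i(l)})$; or, for cofinitely many $y_{j^*}\in Y$, the chain structure of $X$ forces $y_{j^*}$ to sit either strictly between $x_{i(0)}$ and $x_{i(1)}$, or strictly below $x_{i(0)}$. A pigeonhole on these two possibilities either yields two $y$'s strictly between $x_{i(0)}$ and $x_{i(1)}$, forming the special $4$-cycle $(x_{i(1)}, y_{j_1^*}, x_{i(0)}, y_{j_2^*})$, or places infinitely many $y_{j^*}$ with $j^*\ge i(1)$ below $x_{i(0)}$, contradicting HM at $x_{i(1)}$ with bound equal to the level of $x_{i(0)}$. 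The main obstacle is precisely this comparable-case bookkeeping: one cannot directly find two ``between'' $x$'s and must instead analyze the infinite substructures $X$ and $Y$ above $y_\omega$, repeatedly applying HM to force chain structure and exhaust the remaining configurations; by contrast, the no-splitting hypothesis is used essentially only in the incomparable case.
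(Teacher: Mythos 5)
Your proof is correct, and both arguments terminate in the same kind of configuration: a special $4$-cycle made of two monotone $T$-paths joining a common bottom vertex to a common top vertex. But the decomposition is genuinely different. The paper first shows that the whole set $\{x_i \mid i<\omega\}$ is a $<_T$-chain and that $y_\omega$ and $y_{\omega+1}$ lie on different levels $\gamma<\delta$; it then picks $x_k <_T x_{k+1}$ above level $\gamma$ (hence above the lower of the two special vertices) and closes the cycle $\langle y_\omega, x_k, z, x_{k+1}, y_\omega\rangle$ by producing a single common neighbor $z$ of $x_k$ and $x_{k+1}$ lying above both, using that only finitely many of their infinitely many common neighbors can sit low. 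You instead split on whether $y_\omega$ and $y_{\omega+1}$ are comparable, kill the incomparable case outright by deriving $y_\omega=y_{\omega+1}$ from non-splitting, and in the comparable case anchor the cycle at $y_\omega$ from below, hunting for a top vertex among the $y_j$'s above it and falling back on configurations trapped between consecutive elements of your chain $X$. Your version is longer, but it has the virtue of explicitly handling the case in which common neighbors sit strictly between two comparable $x$'s (your final pigeonhole), a configuration the paper's phrase ``infinitely many common neighbors whose levels are above $\alpha(k+1)$'' passes over quickly. One small correction to your closing remark: non-splitting is not used only in the incomparable case. To prove that $X$ (or, in the paper, all of $\{x_i\}$) is a chain, you must also exclude two incomparable members lying on the same limit level with identical predecessor sets --- their infinitely many common neighbors are then common predecessors cofinal in that level, and it is non-splitting rather than the HM condition that identifies the two vertices. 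This does not affect the correctness of the argument, only the attribution of where the hypothesis is needed.
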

\begin{proof}
    Clearly, $G$ has no triangles. Suppose $\{x_i \mid i < \omega\}$ and $\{y_i \mid i < \omega+2\}$ form an $H_{\omega, \omega+2}$ subgraph in $G$. For each $i < \omega$, let $\alpha(i) < \omega_1$ be such 
    that $x_i \in T_{\alpha(i)}$. First, note that $\{x_i \mid i < \omega\}$ is linearly ordered 
    by $<_T$. Otherwise, there would be $<_T$-incomparable $x_i$ and $x_j$ with infinitely many common neighbors. However, as $T$ does not split on limit levels, it must be the case that $\alpha(i) \neq \alpha(j)$. This contradicts the definition of a $T$-HM graph.
    
    Next, note that $\{x_i \mid i < \omega\} \subseteq \mathrm{N}(y_\omega) \cap \mathrm{N}(y_{\omega+1})$. The nodes $y_\omega$ and $y_{\omega+1}$ cannot lie in the same level of $T$ as the tree does not split on limit levels, and they share infinitely many common neighbors. Suppose that $y_\omega \in T_\gamma$, $y_{\omega+1} \in T_\delta$ and $\gamma < \delta$ (the other case is symmetric). Then, as $G$ is $T$-HM it follows that only finitely many elements of $\{x_i \mid i < \omega\}$ lie below level $\gamma$. Choose $k < \omega$ such that 
    $\gamma < \alpha(k) < \alpha(k+1)$.  By the arguments of the previous 
    paragraph, $x_k$ and $x_{k+1}$ have infinitely many common neighbors 
    whose levels are above $\alpha(k+1)$; choose one such neighbor, and 
    call it $z$. Then $\langle y_\omega, x_k, z, x_{k+1},y\rangle$ is 
    a special cycle in $G$, which is a contradiction.
\end{proof}
\begin{remark}
    Recall that $K_{\omega,\omega}$ denotes the complete bipartite graph 
    with countably infinitely many vertices on each side. Since 
    $H_{\omega, \omega+2}$ is a subgraph of $K_{\omega,\omega}$, 
    it follows that $K_{\omega, \omega}$ is also forbidden in special cycle-free $T$-HM graphs.
\end{remark}

Lemma \ref{nokomegaomegainmonotonefree} will be applicable to the graphs constructed in Theorems \ref{cohenandhmgraph} and \ref{thmgraphinzfc}. In particular, those graphs will not have $H_{\omega, \omega+2}$ as a subgraph.
We are now ready to answer Soukup's question.

\begin{theorem} \label{cohenandhmgraph}
    Adding a Cohen real forces that for each $s\in\omega$ there is a special cycle--free Hajnal--Máté graph without odd cycles of length at most $2s+1$.
\end{theorem}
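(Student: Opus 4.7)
The plan is to construct, in the Cohen extension $V[g]$, a graph $G = (\omegaone, E)$ exhibiting all three required properties. Fix $n \ge 2s^2 + 1$ and set $t := t^n_s$. Working first in $V$, apply Theorem \ref{zfc_guessing_thm} to fix a $C$-sequence $\vec C$ that is a strong $\vec u$-guessing sequence for every bounded-width sequence $\vec u$ of disjoint types. The edge rule will take the following form: for $\alpha < \beta$ in $\lim(\omegaone)$, declare $\{\alpha, \beta\} \in E$ iff $\alpha \in C_\beta$ with $\alpha = C_\beta(m)$ for some $m \ge n$, $\type(C_\alpha[n], C_\beta[n]) = t$, and the Cohen real $r$ satisfies a prescribed condition at a position determined by $m$ (for instance, $r(m) = 0$). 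The first two requirements are ground-model structural conditions, while the Cohen real provides the additional generic information that will force $G$ to be uncountably chromatic.

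The HM sparseness of $G$ is immediate since $\mathrm{N}^<(\beta) \subseteq C_\beta$, which has order type $\omega$ and is cofinal in $\beta$. The type condition in the edge rule makes the assignment $h(\beta) := C_\beta[n]$ a graph homomorphism from $G$ into the generalized Specker graph $\mathsf{S}^n_s$; since $n \ge 2s^2 + 1$, Theorem \ref{erdos_hajnal_thm} and Lemma \ref{nooddcyclelemma} together ensure that $G$ has no odd cycle of length $\le 2s+1$. For special cycle-freeness, I plan to argue directly that a putative special cycle would force an incompatible chain of type-interleavings among the various $C_{x_i}[n]$'s, inconsistent with the descending-then-ascending ordering of the cycle's vertices, independently of $r$; Lemma \ref{nokomegaomegainmonotonefree} will then further forbid triangles, $H_{\omega, \omega+2}$, and hence $K_{\omega, \omega}$.

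The heart of the argument is uncountable chromatic number. Suppose, for contradiction, that a condition $p \in \cohen{1}$ forces a name $\dot c$ to be a proper coloring of $\dot G$ with $\omega$ colors. Fix an enumeration of $\cohen{1} \times \omega$ in order type $\omega$, and, in $V$, define $f \colon \omegaone \to \omega$ so that $f(\gamma)$ is the index of the lex-least pair $(q_\gamma, c_\gamma)$ with $q_\gamma \le p$ and $q_\gamma \Vdash \dot c(\gamma) = c_\gamma$; such pairs exist by density. Apply Theorem \ref{zfc_guessing_thm} with the constant type sequence $\langle t, t, \ldots \rangle$ to obtain $\beta \in \lim(\omegaone)$ and $k^* < \omega$ with $f(\beta) = k^*$ such that, for every $\eta < \beta$, some $\alpha \in \lim(\beta)$ satisfies $f(\alpha) = k^*$, $\type(C_\alpha[n], C_\beta[n]) = t$, and $C_\alpha(n) > \eta$. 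Letting $(q^*, c^*)$ be the pair coded by $k^*$, we then have $q^* \Vdash \dot c(\alpha) = \dot c(\beta) = c^*$ for every such $\alpha$. Thus it suffices to find such an $\alpha$ together with a common extension $q' \le q^*$ forcing $\{\alpha, \beta\} \in \dot E$, contradicting that $\dot c$ is forced to be proper.

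The main obstacle is the gap between Theorem \ref{zfc_guessing_thm}, which only yields $\alpha \in \lim(\beta)$, and the edge rule's requirement $\alpha \in C_\beta$. I expect to overcome this either by refining the definition of $f$ to carry extra positional information so that the $\alpha$'s produced by guessing necessarily lie in $C_\beta$, or by tweaking the edge rule so that the relevant structural condition is automatic for the $\alpha$'s the theorem supplies. Once $\alpha = C_\beta(m)$ is secured, choosing $\eta$ to be $C_\beta(|q^*|)$ ensures $m > |q^*|$, so position $m$ lies outside $\mathrm{dom}(q^*)$, and $q' := q^* \cup \{(m, 0)\}$ then simultaneously forces $\{\alpha, \beta\} \in \dot E$ and preserves $q^* \Vdash \dot c(\alpha) = \dot c(\beta)$, giving the desired contradiction.
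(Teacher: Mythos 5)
Your proposal correctly identifies the right ingredients (the Specker graph $\mathsf{S}^n_s$, the homomorphism $\beta \mapsto C_\beta[n]$ for excluding short odd cycles, and Theorem \ref{zfc_guessing_thm} for the chromatic number), but it has two genuine gaps, and the first one is fatal to the edge rule as you have set it up. You require $\mathrm{N}^<(\beta) \subseteq C_\beta$ in order to get HM sparseness ``for free,'' but, as you yourself observe, Theorem \ref{zfc_guessing_thm} only supplies ordinals $\alpha \in \lim(\beta)$ with the right type and with $C_\alpha(n)$ large; it gives no control whatsoever over whether $\alpha$ lands in the order-type-$\omega$ set $C_\beta$, and no refinement of the colouring $f$ can force this (guessing into a prescribed set of order type $\omega$ is essentially club guessing, not a $\zfc$ phenomenon). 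The paper resolves this by abandoning $\mathrm{N}^<(\beta) \subseteq C_\beta$ entirely: it fixes ground-model bijections $e_\beta : \omega \to \beta$ and lets the $k$-th \emph{candidate} neighbour of $\beta$ be $e_\beta(r(k))$, accepted only if it passes several tests. This makes \emph{every} $\alpha < \beta$ reachable by a one-step extension of a Cohen condition, which is exactly what the density argument needs; sparseness is then no longer automatic and is instead recovered by demanding that the $k$-th accepted neighbour lie above $C_\beta(k)$. Your fallback of ``tweaking the edge rule so the structural condition is automatic'' is in effect this change, but once you make it your claimed immediate proof of sparseness disappears and must be replaced.

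The second gap is special cycle-freeness. The type condition $\type(C_\alpha[n], C_\beta[n]) \in \{t^n_s, \overline{t^n_s}\}$ constrains only the first $n$ elements of the relevant $C$-sets relative to one another and does not prevent, say, a length-$4$ special cycle $\beta > \alpha' > \gamma$, $\beta > \alpha > \gamma$ in which $\gamma$ is a common lower neighbour of two lower neighbours $\alpha < \alpha'$ of $\beta$: nothing in the interleaving pattern forbids $C_{\alpha'}$ from having elements at positions $\ge n$ lying below $\alpha$. Special cycles can be even, so Lemma \ref{nooddcyclelemma} is of no help here, and your hoped-for ``incompatible chain of type-interleavings'' does not materialize. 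The paper excludes special cycles by maintaining a recursive invariant (``$\beta$ is not $C_\beta(n)$-covered'') and by adding a further acceptance condition on candidate neighbours, namely that $C_{\beta_k}(n)$ exceed all previously accepted neighbours of $\beta$; this guarantees that no two lower neighbours of $\beta$ can head two descending paths meeting below the smaller of them. Some such extra condition, imposed at construction time and compatible with the density argument (the paper verifies that the guessed $\alpha^*$ can always be chosen to satisfy it), is indispensable; without it the claimed property simply need not hold.
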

\begin{proof}
    In the ground model, fix bijections $e_\beta: \omega \to \beta$ for each infinite countable ordinal $\beta$ and let $n\in\omega$ be such that $\mathsf{S}^n_s$ (the generalization of the Specker graph) has no odd cycles of length $2s+1$ or shorter. Recall that $t^n_s$ is the type associated with this graph. Fix a $C$-sequence $\vec{C}$ satisfying Theorem \ref{zfc_guessing_thm}.

    Let $r: \omega \to \omega$ be the Cohen real in the extension. We will define a graph $G$ on $\omegaone$ by specifying the set of smaller neighbors $\mathrm{N}^<(\beta)$ for each $\beta<\omegaone$. We proceed by induction on $\beta$. Given $\delta<\beta$ let us say that $\beta$ is \emph{$\delta$-covered} if there is a monotone decreasing path from $\beta$ to an ordinal $\alpha$ such that $\alpha \le \delta$. Our construction will ensure that for each $\beta \in \lim(\omegaone)$, we have that $\beta$ is not $C_\beta(n)$-covered and, if $\beta < \omega_1$ is a successor ordinal, 
    then $\mathrm{N}^<(\beta) = \emptyset$.

    Suppose we have constructed $G$ up to some $\beta$, i.e., $\mathrm{N}^<(\alpha)$ is defined for each $\alpha<\beta$. If $\beta$ is a successor ordinal, it will have no neighbors below $\beta$. Assume $\beta$ is limit. We also inductively assume that for each $\alpha \in \lim(\beta)$ we have that $\alpha$ is not $C_\alpha(n)$-covered. By induction on $k\in\omega$ we construct a set $K_\beta \in [\omega]^{\le\omega}$ and limit ordinals $\{\beta_k \mid k \in K_\beta\}$ such that:
    \begin{enumerate}
        \item for all $k \in K_\beta$, we have $\beta_k = e_\beta(r(k))$;
        \item for all $k \in K_\beta$, we have $C_{\beta_k}(n) > \max (\{\beta_i \mid i \in K_\beta \cap k\} \cup \{C_\beta(n)\})$;
        \item for all $k \in K_\beta$, we have $\beta_k > \max (\{\beta_i \mid i \in K_\beta \cap k\} \cup \{C_\beta(k)\})$;
        \item for all $k \in K_\beta$, we have $\type(C_\alpha[n],C_\beta[n]) \in \{t^n_s, \overline{t^n_s}\}$, i.e. $\{C_\beta[n], C_{\beta_k}[n]\}$ is an edge in $\mathsf{S}^n_s$.
    \end{enumerate}
    At stage $k$ of the construction, we consider the ordinal $e_\beta(r(k))$ and let $k\not\in K_\beta$ unless it satisfies all of the conditions above. If it \emph{does} satisfy all of the 
    conditions, we let $\beta_k := e_\beta(r(k))$ and put $k$ into $K_\beta$. Finally we let $\mathrm{N}^<(\beta)$ be $\{\beta_k \mid k \in K_\beta\}$. Note that $\beta$ is not $C_\beta(n)$-covered: for each $\alpha \in \mathrm{N}^<(\beta)$ we have $C_{\alpha}(n) > C_{\beta}(n)$. The induction hypothesis now gives that there is no monotone decreasing path from any such $\alpha$ to an ordinal below $C_{\beta}(n)$. In particular, there is no monotone path from $\beta$ to an ordinal less than or equal to $C_{\beta}(n)$.

    By the third condition we obtain that for each $\alpha<\beta$ we have $|\mathrm{N}(\beta) \cap \alpha| < \omega$. The second condition ensures that if $\alpha < \alpha' < \beta$ and $\alpha, \alpha'$ are both elements of $\mathrm{N}^<(\beta)$, then $\alpha'$ is not $\alpha$-covered, in particular $\alpha, \alpha', \beta$ cannot be the three topmost elements of a special cycle, hence $G$ is special cycle--free. Last but not least, the fourth condition ensures that $G$ can have no odd cycles of length $2s+1$ or less. To see this, consider the map which takes $\beta$ to $C_{\beta}[n]$ (as a vertex in $\mathsf{S}^n_s$). 
    The fourth condition ensures that this is a graph homomorphism from 
    $G$ to $\mathsf{S}^n_s$, so, by Lemma \ref{nooddcyclelemma} the graph $G$ can have no odd cycle of length $2s+1$ or shorter.

    It remains to prove that $G$ is uncountably chromatic.
    In the ground model, let $\dot{G}$ be a name for the graph 
    constructed above, and let $\dot{c}$ be a name such that $\cohengen{\omega}{1} \Vdash \dot{c}: \omegaone \to \omega$, i.e., 
    $\dot{c}$ is a name for a coloring of $\dot{G}$. We must show that it is forced that $\dot{c}$ is not a proper coloring of $\dot{G}$. To this end, let $p$ be an arbitrary condition in $\cohengen{\omega}{1}$. 
    Consider the coloring $d: \omegaone \to \omega \times \cohengen{\omega}{1}$ such that $d(\alpha) = (k, q)$ if and only if 
    \begin{itemize}
        \item $q$ is the least condition (in some well-ordering of $\cohengen{\omega}{1}$) extending $p$ such that $q$ decides the value of $\dot{c}(\alpha)$; and
        \item $q \Vdash \dot{c}(\alpha) = k$.
    \end{itemize}
    Since $\omega \times \cohengen{\omega}{1}$ is countable, we can 
    apply Theorem \ref{zfc_guessing_thm} to the constant sequence
    of disjoint types taking value $t^n_s$ to obtain a $\beta \in \lim(\omegaone)$, a $k < \omega$, and a condition $q \leq p$ such that for each $\eta < \beta$ there is an $\alpha \in \lim(\beta)$ such that 
    \begin{itemize}
        \item $C_\alpha(n) > \eta$;
        \item $q \Vdash \dot{c}(\alpha) = k = \dot{c}(\beta)$
        \item $\{C_\beta[n], C_{\alpha}[n]\}$ is an edge in $\mathsf{S}^n_s$.
    \end{itemize}
    
    Put $m := |q|$ and note that $q$ decides the first $m$ candidates for neighbors of $\beta$ as $q \Vdash e_\beta(\dot{r}(i)) = e_\beta(q(i))$ for all $i<m$. Put $\lambda := \max\{e_\beta(q(i)) \mid i < m\}$, and note that $\lambda<\beta$. Choose $\alpha^* \in \lim(\beta)$ so that 
    \begin{itemize}
        \item $C_{\alpha^*}(n) > \max\{\lambda, C_\beta(n), C_\beta(m)\}$;
        \item $q \Vdash \dot{c}(\alpha^*) = k$
        \item $\{C_\beta[n], C_{\alpha^*}[n]\}$ is an edge in $\mathsf{S}^n_s$.
    \end{itemize}
    Define $q^* := q \cup \{(m, e_\beta^{-1}(\alpha^*))\}$.

    By construction we have that $q^* \Vdash \dot{c}(\alpha^*)=\dot{c}(\beta) = k$. Moreover, we claim that $q^* \Vdash \alpha^* \in \mathrm{N}^<(\beta)$. Indeed, suppose we are in a generic extension by a Cohen real $r$ and $q^* \subseteq r$. At stage $\beta$ in the construction consider the $m$-th step. The ordinal $e_\beta(r(m))$ at that point is exactly $\alpha^*$, as $q^* \Vdash e_\beta(\dot{r}(m))=\alpha^*$. Note that our choice of $\alpha^*$ ensures that $\alpha^*$ satisfies all the conditions for it being a neighbor of $\beta$ so $\beta_m := \alpha^*$, hence $\alpha^* \in \mathrm{N}^<(\beta)$. In particular, $q^*$ forces that $\dot{c}$ is 
    not a proper coloring of $\dot{G}$. We have shown that the set of conditions forcing that $\dot{c}$ is not a name for a proper coloring is dense. Thus $G$ cannot be countably chromatic.
\end{proof}

Using the same technique, we can construct a simple $\zfc$ example of a $T$-HM graph with no special cycles and no short odd cycles. The first such graph was constructed by Hajnal and Komjáth \cite[Theorem 2.]{hajnalkomjath1984}, but the construction used $\ch$. A $\zfc$ example 
of a triangle-free and special cycle-free $T$-HM graph is due to Soukup and uses a tree of the form $\{t \subseteq S \mid t \text{ is closed}\}$ where $S \subseteq \omegaone$ is stationary, co-stationary, and the tree order is end extension \cite[Theorem~5.5]{soukup2015}. The tree we will consider is ${}^{<\omegaone}\omega$, ordered by end-extension. Our theorem extends Komjáth and Shelah's result \cite[Theorem 10]{komjathshelah1988} by excluding all special cycles. Note that ${}^{<\omegaone}\omega$ does not split on limit levels.

\begin{theorem} \label{thmgraphinzfc}
    For each $s \in \omega$, there is a special cycle--free ${}^{<\omegaone}\omega$-HM graph without odd cycles of length at most $2s+1$.
\end{theorem}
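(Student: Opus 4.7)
The plan is to mirror the proof of Theorem \ref{cohenandhmgraph}, with the tree $T := {}^{<\omegaone}\omega$ itself replacing the Cohen real: for each $t \in T_\beta$ at a limit level, the sequence $t\restriction\omega \in {}^\omega\omega$ drives the construction of the downward-neighbour set $\mathrm{N}^<(t)$. Concretely, fix $n$ large enough that $\mathsf{S}^n_s$ has no odd cycles of length at most $2s+1$, fix bijections $e_\beta : \omega \to \beta$ for each limit $\beta < \omegaone$, and fix a $C$-sequence $\vec{C}$ satisfying Theorem \ref{zfc_guessing_thm}. For successor-level $t$ set $\mathrm{N}^<(t) := \emptyset$; for $t \in T_\beta$ with $\beta$ a limit, inductively build $K_t \subseteq \omega$ and $\{\beta_k \mid k \in K_t\}$ by letting $\beta_k := e_\beta(t(k))$ at stage $k$ and placing $k$ into $K_t$ exactly when the four conditions of the proof of Theorem \ref{cohenandhmgraph} hold (with $t(k)$ in place of $r(k)$). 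Finally put $\mathrm{N}^<(t) := \{t \restriction \beta_k \mid k \in K_t\}$.

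Sparseness, the absence of special cycles, and the absence of odd cycles of length at most $2s+1$ transfer verbatim from the Cohen proof: condition (3) gives sparseness, condition (2) forbids the topmost triple of any special cycle, and condition (4) makes the level-map $t \mapsto C_{\dom(t)}[n]$ a graph homomorphism from $G$ into $\mathsf{S}^n_s$, so Lemma \ref{nooddcyclelemma} and Theorem \ref{erdos_hajnal_thm} deliver the odd-girth bound. Since $T$ does not split on limit levels, Lemma \ref{nokomegaomegainmonotonefree} additionally rules out $K_{\omega,\omega}$ as a subgraph.

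For uncountable chromatic number, fix an arbitrary coloring $c : T \to \omega$. The Cohen proof extended a forcing condition $q$ by a single coordinate to force a given good ordinal to be a neighbour of $\beta^*$; the ZFC analogue is to freely \emph{choose} $t^* \in T_{\beta^*}$ whose value at some coordinate $m$ equals $e_{\beta^*}^{-1}(\alpha)$ for a carefully picked good $\alpha < \beta^*$, which by conditions (1)--(4) puts $\alpha$ into the set $L(t^*) := \{\beta_k \mid k \in K_{t^*}\}$. I would encode into a function $f : \omegaone \to \omega$, with countable range packaged through a bijection with $\omega \times {}^{<\omega}\omega$, a pair $f(\beta) = (k_\beta, u_\beta)$ witnessing that $c$ attains $k_\beta$ on a size-$2^{\aleph_0}$ subset of the cone $T_\beta[u_\beta] := \{s \in T_\beta \mid s\restriction|u_\beta| = u_\beta\}$. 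Applying Theorem \ref{zfc_guessing_thm} then returns $\beta^* \in \lim(\omegaone)$, a pair $(k^*, u^*)$, and cofinally many good $\alpha < \beta^*$ with $f(\alpha) = (k^*, u^*)$ and $\type(C_\alpha[n], C_{\beta^*}[n]) = t^n_s$. Picking such $\alpha$ with $C_\alpha(n)$ sufficiently large makes conditions (1)--(4) accept $\alpha$ as $\beta_m$ at stage $m := |u^*|$ for every $t^* \in T_{\beta^*}$ extending $u^* {}^\frown \langle e_{\beta^*}^{-1}(\alpha)\rangle$, so that $\alpha \in L(t^*)$ and $t^* \restriction \alpha \in \mathrm{N}^<(t^*)$; a final pigeonhole over the $2^{\aleph_0}$ such $t^*$ should produce one with $c(t^*) = c(t^* \restriction \alpha) = k^*$, the desired monochromatic $G$-edge.

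The main obstacle is pinning down the right $f$: in the Cohen proof the ``least deciding condition'' is handed over by the forcing relation, but here $c$ is already a fixed function on $T$ and the pair $(k_\beta, u_\beta)$ must be chosen combinatorially with enough ``persistence'' that the single-coordinate extension by $e_{\beta^*}^{-1}(\alpha)$ keeps $|c^{-1}(k^*) \cap T_\gamma[u^* {}^\frown \langle e_{\beta^*}^{-1}(\alpha)\rangle]|$ equal to $2^{\aleph_0}$ for both $\gamma = \beta^*$ and $\gamma = \alpha$. I expect this to come from a König-style descent through ${}^{<\omega}\omega$ selecting $u_\beta$ as the lex-least ``fully branching'' stem in the tree of thick pairs $(k,u)$ --- the splitting estimate that a countable union of strictly smaller-than-continuum sets cannot have size continuum is what powers the descent --- and then to be transported between $\beta^*$ and cofinally many $\alpha$ by the type-guessing output, at which point the pigeonhole bridges $c(t^*)$ and $c(t^*\restriction\alpha)$ simultaneously.
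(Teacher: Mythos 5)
The combinatorial part of your construction (sparseness from condition (3), special-cycle-freeness from condition (2), the homomorphism to $\mathsf{S}^n_s$ from condition (4), and the appeal to Lemma \ref{nokomegaomegainmonotonefree}) transfers from Theorem \ref{cohenandhmgraph} exactly as you say. The problem is the chromaticity argument, and the gap is precisely at the step you flag as ``the main obstacle'': you need a \emph{single} node $t^* \in T_{\beta^*}$ with $c(t^*) = k^*$ \emph{and} $c(t^*\restriction\alpha) = k^*$, but the thickness data you propose to encode into $f$ cannot deliver this. Even if $A := \{t \in T_{\beta^*} \mid u^*{}^\frown\langle e_{\beta^*}^{-1}(\alpha)\rangle \sqsubseteq t,\ c(t)=k^*\}$ and $B := \{s \in T_\alpha \mid u^*{}^\frown\langle e_{\beta^*}^{-1}(\alpha)\rangle \sqsubseteq s,\ c(s)=k^*\}$ both have size $2^{\aleph_0}$, the restriction map $t \mapsto t\restriction\alpha$ can collapse all of $A$ onto a single node of $T_\alpha$ lying outside $B$, so the ``final pigeonhole over the $2^{\aleph_0}$ such $t^*$'' has nothing to bite on. The two thickness statements live at different levels and are related only through a map with no injectivity, so no cardinality estimate bridges them. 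In the Cohen proof this coordination is done for you by the forcing relation (the deciding condition $q$ fixes $\dot c(\beta)$ \emph{before} you choose which $\alpha$ to wire in); there is no analogue of that move once $c$ is a fixed function on the tree.

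The missing idea is to change the \emph{construction}, not the counting. The paper replaces your clause ``$\beta^f_k = e_\beta(f(k))$'' by the clause ``$f(\beta^f_k) = k$'' (taking the minimal such $\alpha$ satisfying the other three conditions): the node $f$ is read as an oracle that directly labels which ordinals are admissible as stage-$k$ neighbors. Then, given a coloring $c$, one defines a single branch $g:\omega_1\to\omega$ self-referentially by $g(\beta) = c(g\restriction\beta)$, so that any neighbor $g\restriction\alpha$ of $g\restriction\beta$ selected at stage $k$ automatically satisfies $c(g\restriction\alpha) = g(\alpha) = k$; Theorem \ref{zfc_guessing_thm} applied to $g$ then produces a $\beta$ with $c(g\restriction\beta)=k$ at which stage $k$ is nonempty, yielding the monochromatic edge with no pigeonhole at all. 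I would encourage you to rework your proof along these lines; as it stands, the uncountable chromaticity of your graph is not established (and it is not even clear that your particular graph, whose lower neighborhoods depend only on $t\restriction\omega$ and $\dom(t)$, is uncountably chromatic).
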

\begin{proof}
     The main ideas of the proof are the same as in Theorem \ref{cohenandhmgraph}. We point out the differences. In the beginning, we fix the same objects: the graph $\mathsf{S}^n_s$ and the $C$-sequence $\vec{C}$ from Theorem \ref{zfc_guessing_thm}. For $\delta < \beta < \omega_1$ and $f \in {}^\beta\omega$ we say that \emph{$f$ is $\delta$-covered} if there is a monotone decreasing path from $f$ to $f\restriction\gamma$ for some $\gamma\le\delta$. We will construct the desired graph $G = ({^{<\omega_1}}\omega, E)$. By $\mathrm{N}^<(f)$ we will denote the set $\{f\restriction\gamma \mid \gamma < \beta \land \{f\restriction\gamma, f\} \in E\}$.

    The construction proceeds by specifying $\mathrm{N}^<(f)$ by recursion on the levels of the tree, and the recursion hypothesis is that if $f \in {}^\beta\omega$ then $f$ is not $C_\beta(n)$-covered. If $\beta$ is a successor ordinal and $f \in {^\beta}\omega$, then $\mathrm{N}^<(f)$ will be empty. If $\beta$ is limit, the construction for each $f \in {}^\beta\omega$ is identical. Let us fix an $f \in {}^\beta\omega$. We use induction on $k\in\omega$ to construct a set $K_f \in [\omega]^{\le\omega}$ and limit ordinals $\{\beta^f_k \mid k \in K_f\}$ such that:
    \begin{enumerate}
        \item for all $k \in K_f$, we have $f(\beta^f_k) = k$;
        \item for all $k \in K_f$, we have $C_{\beta^f_k}(n) > \max (\{\beta^f_i \mid i \in K_f \cap k\} \cup \{C_\beta(n)\})$;
        \item for all $k \in K_f$, we have $\beta^f_k > \max (\{\beta^f_i \mid i \in K_\beta \cap k\} \cup \{C_\beta(k)\})$;
        \item for all $k \in K_f$, we have $\type(C_\beta[n],C_{\beta^f_k}[n]) \in \{t^n_s, \overline{t^n_s}\}$, i.e. $\{C_\beta[n], C_{\beta^f_k}[n]\}$ is an edge in $\mathsf{S}^n_s$.
    \end{enumerate}
    At stage $k$ if there is some $\alpha$ satisfying the required properties, we let $\beta^f_k$ be the minimal such $\alpha$ and put $k\in K_f$. Otherwise $\beta^f_k$ is undefined and $k \not\in K_f$. Lastly we put $\mathrm{N}^<(f) := \{f\restriction\beta^f_k \mid k \in K_f\}$.

    The fact that this graph has no special cycles and no short odd cycles is proved analogously as in Theorem \ref{cohenandhmgraph}. We only comment on the chromaticity. Fix a coloring $c: {}^{<\omegaone}\omega \to \omega$ and define $g:\omegaone \to \omega$ by recursion as $g(\beta) = c(g\restriction\beta)$.

    As in the previous proof, by Theorem \ref{zfc_guessing_thm} the sequence $\vec{C}$ strongly guesses the constant sequence $\langle t^n_s \mid i < \omega \rangle$ with respect to the function $g$. Let $\beta < \omegaone$ and $k < \omega$ be witnesses for the strong guessing. Put $\lambda := \max\{\beta^{g\restriction\beta}_i \mid i < k\}$. Choose $\alpha < \beta$ so that $g(\alpha) = k$, $\{C_\beta[n], C_{\alpha}[n]\}$ is an edge in $\mathsf{S}^n_s$ and $C_{\alpha}(n) > \max\{\lambda, C_\beta(n), C_\beta(k)\}$.
    
    The existence of such an $\alpha$ implies that there is some minimal example at stage $k$ in the construction of the set $\mathrm{N}^<(g\restriction\beta)$, so the element $\beta^{g\restriction\beta}_k$ was defined. Hence $c(g\restriction\beta) = g(\beta) = k = g(\beta^{g\restriction\beta}_k) = c(g\restriction\beta^{g\restriction\beta}_k)$, so $c$ is not a proper coloring.
\end{proof}

\section{Growth rates of chromatic numbers} \label{growth_rate_sec}

By the De Bruijn-Erd\H{o}s compactness theorem \cite{debruijn1951}, if $G$ is a graph of infinite chromatic number, 
then, for every $k < \omega$, there is a finite subgraph of $G$ with chromatic number $k$. One 
can then define a (strictly increasing) function $f_G:\omega \rightarrow \omega$ by letting, 
for all $k < \omega$, $f_G(k)$ be the least number of vertices in a subgraph of $G$ with 
chromatic number $k$. In \cite{erdos1982}, Erd\H{o}s, Hajnal, and Szemer\'{e}di asked whether it is the case 
that, for every function $f:\omega \rightarrow \omega$, there is a graph $G$ of 
uncountable chromatic number such that $f_G$ grows more quickly than $f$. In 
\cite{lambiehanson2020b}, the first author answered this question positively, in fact 
producing, for each function $f$, a graph $G$ of chromatic number $\omega_1$ such that 
$f_G(k) > f(k)$ for all $3 \leq k < \omega$. It was additionally shown there that, if 
$\diamondsuit$ holds, then this graph $G$ can be taken to be an HM graph. We show here 
that, if $\mathsf{DTG}$ holds in the ground model, then the positive answer to the 
question of Erd\H{o}s, Hajnal, and Szemer\'{e}di is witnessed by HM graphs in the 
extension by a single Cohen real.

\begin{theorem} \label{growth_rate_thm}
    Suppose that $\mathsf{DTG}$ holds. Then, in the 
    forcing extension by a single Cohen real, the following statement is true: For 
    every function $f:\omega \rightarrow \omega$, there is an HM graph $G$ such that 
    $f_G(k) > f(k)$ for all $3 \leq k < \omega$.
\end{theorem}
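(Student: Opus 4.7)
The plan is to combine the Cohen-real construction from the proof of Theorem \ref{cohenandhmgraph} with the slow-growth construction of \cite{lambiehanson2020b}, with $\mathsf{DTG}$ replacing $\diamondsuit$ as the combinatorial ingredient. The structural device from \cite{lambiehanson2020b} is to choose parameters $(n_k, s_k)_{k \geq 3}$ with $n_k \to \infty$ such that every subgraph of $\mathsf{S}^{n_k}_{s_k}$ with at most $f(k)$ vertices has chromatic number strictly less than $k$, and then to arrange $G$ so that, for every $k$, the assignment $\beta \mapsto C_\beta[n_k]$ is a graph homomorphism $G \to \mathsf{S}^{n_k}_{s_k}$. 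This at once forces $\chi(H) < k$ for every finite subgraph $H$ of $G$ with $|V(H)| \leq f(k)$, giving the desired growth bound.

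In $V$, I would enumerate a sequence $\vec{t}^* = \langle t^*_k \mid k < \omega \rangle$ of disjoint types in which each Specker type $t^n_s$ appears cofinally often, and apply $\mathsf{DTG}$ to fix a $\vec{t}^*$-guessing $C$-sequence $\vec{C}$. In $V[g]$, given $f \colon \omega \to \omega$ (which I may assume increasing), I would select parameters $(n_k, s_k)$ satisfying the \cite{lambiehanson2020b} growth property for $f(k)$ together with an index function $\ell \colon \omega \to \omega$ such that $t^*_{\ell(k)} = t^{n_k}_{s_k}$. I would then construct $G$ by recursion on $\beta < \omega_1$ following the template of Theorem \ref{cohenandhmgraph}: at a limit $\beta$, the Cohen real proposes, at each stage $m$, the candidate $\beta_m = e_\beta(r(m))$, which is added to $\mathrm{N}^<(\beta)$ only if the usual HM bookkeeping is met and if $\type(C_{\beta_m}[n_k], C_\beta[n_k]) \in \{t^{n_k}_{s_k}, \overline{t^{n_k}_{s_k}}\}$ for every $k$. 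For the chromatic number, given a name $\dot{c}$ for a coloring $\omega_1 \to \omega$, I would define $d \colon \omega_1 \to \omega$ in $V$ so that $d(\alpha)$ encodes (via a fixed countable bijection into the indexing of $\vec{t}^*$) both the least extension $q$ of a fixed condition $p$ deciding $\dot{c}(\alpha)$ and the forced value. Applying the $\vec{t}^*$-guessing to $d$ produces $\alpha < \beta$ with $d(\alpha) = d(\beta)$ and the required type at the corresponding level; extending $q$ to force $\alpha \in \mathrm{N}^<(\beta)$ then yields the usual contradiction to $\dot{c}$ being a proper coloring, exactly as in Theorem \ref{cohenandhmgraph}.

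The main obstacle is ensuring that a single Cohen candidate $\beta_m$ can simultaneously satisfy infinitely many type constraints, since a generic pair $(\alpha, \beta)$ typically fails the type constraint as soon as $n_k$ becomes large. The resolution I expect is to choose $(n_k, s_k)$ with a nested compatibility so that the type constraint at level $k+1$ implies all preceding ones, reducing the infinite family of constraints to a single limiting condition. At this point $\mathsf{DTG}$ --- which, unlike the bounded-width Theorem \ref{zfc_guessing_thm}, handles sequences of types with unbounded widths $n_k$ --- provides exactly the guessing along the generic Cohen real that the construction requires.
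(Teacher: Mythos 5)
There is a genuine gap, and it sits exactly where you locate ``the main obstacle.'' You propose to require that \emph{every} edge $\{\alpha,\beta\}$ of $G$ satisfy $\type(C_\alpha[n_k], C_\beta[n_k]) \in \{t^{n_k}_{s_k}, \overline{t^{n_k}_{s_k}}\}$ for \emph{every} $k$, and to make this feasible by choosing the parameters with a ``nested compatibility'' so that the constraint at level $k+1$ implies all preceding ones. No such choice of parameters exists. Since $C_\alpha[n_k]$ and $C_\beta[n_k]$ are initial segments of $C_\alpha[n_{k+1}]$ and $C_\beta[n_{k+1}]$, the type at level $k$ is completely determined by the type at level $k+1$ (it is the type induced on the first $n_k$ indices of each side), and this induced type is in general not a Specker type at all: for instance, if $n_k \leq s_{k+1}$, then $\type(C_\alpha[n_{k+1}],C_\beta[n_{k+1}]) = t^{n_{k+1}}_{s_{k+1}}$ forces $\type(C_\alpha[n_k],C_\beta[n_k])$ to be the separated type $0^{n_k}1^{n_k}$, which is neither $t^{n_k}_{s_k}$ nor $\overline{t^{n_k}_{s_k}}$. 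Because the growth requirement forces $s_k \geq (f(k)-1)/2 \to \infty$, the constraints at different levels are mutually \emph{inconsistent} rather than nested; your graph would have no edges at limit levels, and in any case neither a single forcing condition nor a single application of $\mathsf{DTG}$ (which guesses one type of one finite width) could secure infinitely many such constraints in the density argument.

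The paper's resolution is structurally different in two ways, and both are needed. First, it evaluates the Specker types on \emph{pairwise disjoint blocks} $I_0, I_1, \ldots$ of the $C$-sequences (i.e., $C_\alpha[I_j]$ rather than the initial segments $C_\alpha[n_j]$), so that constraints at different levels concern different coordinates; $\mathsf{DTG}$ is then applied to the \emph{concatenated} types $t^{n_0}_{s_0}{}^\frown \cdots {}^\frown t^{n_k}_{s_k}$, indexed by conditions $p \in \cohengen{\omega}{1}$ because $f$ is only a name and the parameters $n_j, s_j$ are decided by finite conditions. Second, an edge is only required to satisfy the block constraints for $j \leq k$ for \emph{some} witnessing $k$ (tied to the stage $\ell_k$ at which the Cohen real proposes $\alpha$ as a neighbor of $\beta$); the finite-subgraph bound is then obtained not from a single homomorphism but from the edge-decomposition $H = H_0 \cup \cdots \cup H_{k-1} \cup H_{\geq k}$, where each $H_j$ is a forest and $H_{\geq k}$ maps homomorphically to $\mathsf{S}^{n_k}_{s_k}$ via the $k$-th block, yielding $\chi(H) \leq 2^{k+1}$ (which suffices after reparametrizing $f$) rather than the $\chi(H) < k$ you aim for. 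Your outer framework --- the Cohen construction as in Theorem \ref{cohenandhmgraph}, $\mathsf{DTG}$ in place of $\diamondsuit$, and coding the deciding condition and forced value into the function handed to the guessing sequence --- is the right one, but without the block/concatenation device and the decomposition argument the proof does not go through.
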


\begin{proof}
    Let $\P = \cohengen{\omega}{1}$. We will show that the following 
    statement holds in the forcing extension by $\P$: for every $f:\omega \rightarrow \omega$, there is an 
    HM graph $G = (\omega_1, E)$ such that, for every $k < \omega$, 
    if $H$ is a subgraph of $G$ with at most $f(k)$ vertices, then 
    $\chi(H) \leq 2^{k+1}$. This clearly suffices for the theorem.
    
    Let $\dot{r}$ be the canonical $\P$-name for the 
    Cohen real, and let $\dot{f}$ be a $\P$-name for an arbitrary function from 
    $\omega$ to $\omega$. For each $p \in \P$, fix an extension $p' \leq p$ of 
    minimal length such that $p'$ decides the value of $\dot{f}(j)$ for 
    all $j \leq |p|$; for each such $j$, let $s_{p,j}$ be the least $0 < s < \omega$ 
    such that $p' \Vdash \dot{f}(j) \leq 2s + 1$. Let $n_{p,j} := 2s_{p,j}^2 + 1$, 
    and let $n^*_p := \sum_{j\leq |p|} n_{p,j}$. 
    For each $p \in \P$ and $m < \omega$, let $t_{p,m}$ be the concatenation 
    $t^{n_{p,0}}_{s_{p,0}}{}^\frown t^{n_{p,1}}_{s_{p,1}}{}^\frown 
    \ldots{}^\frown t^{n_{p,|p|}}_{s_{p,|p|}}$ of length $n^*_p$ 
    (note that this does not 
    depend on the value of $m$). Using our type-guessing assumption, fix a $C$-sequence $\langle C_\alpha \mid 
    \alpha \in \lim(\omega_1)\rangle$ such that, for every function $g:\omega_1 \rightarrow 
    \P \times \omega$, there are distinct $\alpha, \beta \in \lim(\omega_1)$ and 
    $(p,m) \in \P \times \omega$ such that $g(\alpha) = g(\beta) = (p,m)$ and
    $\type(C_\alpha[n^*_p], C_\beta[n^*_p]) = t_{p,m}$. Also, for each infinite $\beta 
    < \omega_1$, fix a bijection $e_\beta : \omega \rightarrow \beta$.

    We temporarily move to the extension by $\P$ and describe how to construct in 
    that model the desired HM graph $G = (\omega_1, E)$. Let $r:\omega \rightarrow \omega$ be the 
    realization of $\dot{r}$ in the extension; note that the generic filter 
    is precisely $\{r \restriction \ell \mid \ell < \omega\}$. For each 
    $k < \omega$, let $s_k$ be the least $0 < s < \omega$ such that 
    $f(k) \leq 2s + 1$, let $n_k = 2s_k^2 + 1$, and let 
    $\ell_k < \omega$ be the least $\ell \geq k$ such that 
    $r \restriction \ell$ decides the value of $\dot{f}(j)$ for all $j \leq k$.
    Let $\{I_k \mid k < \omega\}$ be the partition of $\omega$ into adjacent intervals 
    such that $|I_k| = n_k$. More precisely, $I_0 := n_0$ and, if 
    $I_k$ has been defined, let $m_k = \max\{I_k\}$ and let 
    $I_{k+1} := \{m_k + 1 + j \mid j < n_{k+1}\}$.
    
    As in the proof of Theorem~\ref{cohenandhmgraph}, 
    we specify, for each $\beta < \omega_1$, the set 
    $N^<(\beta)$ of smaller neighbors of $\beta$. If $\beta$ is a 
    successor ordinal, we let $N^<(\beta) = \emptyset$. If 
    $\beta$ is a limit ordinal, we let $N^<(\beta)$ 
    be the set of all $\alpha \in \lim(\beta)$ for which there exists $k < \omega$ such that
    \begin{enumerate}
        \item $\alpha \geq C_\beta(k)$;
        \item $e_\beta(r(\ell_k)) = \alpha$; and
        \item for all $j \leq k$, we have $\type(C_\alpha[I_j], C_\beta[I_j]) 
        \in \{t^{n_j}_{s_j}, \bar{t}^{n_j}_{s_j}\}$.
    \end{enumerate}
    Requirements (1) and (2) above ensure that, for all $\alpha < 
    \beta < \omega_1$, the set $N^{<}(\beta) \cap \alpha$ 
    is finite. We next argue that the finite subgraphs of $G$ 
    behave as desired. This argument is essentially the same 
    as the analogous one in the proof of 
    \cite[Theorem B]{lambiehanson2020b}. First, for all $\alpha < 
    \beta < \omega_1$ with $\{\alpha,\beta\} \in E$, let $k_{\alpha, 
    \beta}$ be the minimal $k < \omega$ witnessing that $\alpha$ and 
    $\beta$ satisfy requirements (1)--(3) above. For each $k < \omega$, 
    let $E_k := \{\{\alpha, \beta\} \in E \mid k_{\alpha,\beta} = k\}$ and 
    $E_{\geq k} := \{\{\alpha, \beta\} \in E \mid k_{\alpha,\beta} \geq k\}$. 
    Given a subgraph $H = (X_H, E_H)$ of $G$ and a $k < \omega$, let 
    $H_k := (X_H, E_H \cap E_k)$ and $H_{\geq k} := (X_H, E_H \cap E_{\geq k})$.

    Now fix a $k < \omega$ and a nonempty subgraph $H$ of $G$ with at most 
    $f(k)$ vertices. Note that the graphs
    \[
      H_0, H_1, \ldots, H_{k-1}, H_{\geq k}
    \]
    form an edge-partition of $H$ and therefore (cf.\ 
    \cite[Proposition 2.6]{lambiehanson2020b}), we have $\chi(H) \leq 
    \chi(H_{\geq k}) \cdot \prod_{j < k} \chi(H_j)$ . 
    For each $j < k$ and $\beta < \omega_1$, there is at most one 
    $\alpha < \beta$ such that $\{\alpha,\beta\} \in E_j$. It follows that 
    $H_j$ has no cycles, so $\chi(H_j) \leq 2$. 

    Next, by requirement (3) above in the definition of $G$, the map 
    $\alpha \mapsto C_\alpha[I_k]$ induces a graph homomorphism from 
    $H_{\geq k}$ to $\mathsf{S}^{n_k}_{s_k}$. Therefore, by 
    Theorem~\ref{erdos_hajnal_thm} and Lemma~\ref{nooddcyclelemma}, 
    $H_{\geq k}$ has no odd cycles of length $2s_k+1$ or shorter. 
    But $H$ itself has at most $f(k) \leq 2s_k+1$ vertices, so 
    $H_{\geq k}$ has no odd cycles, and hence again $\chi(H_{\geq k}) 
    \leq 2$. It follows that $\chi(H) \leq 2 \cdot 2^k = 2^{k+1}$, as 
    desired.

    It remains to show that $G$ is uncountably chromatic. To this end, 
    move back to the ground model, and let $\dot{G}$ be a $\P$-name for 
    the graph constructed in the forcing extension as described above.
    Suppose that $\dot{c}$ is a $\P$-name for a function from $\omega_1$ 
    to $\omega$ and that $p \in \P$. We will find a condition $s \leq p$ forcing 
    that $\dot{c}$ is not a proper coloring of $\dot{G}$.

    First, for each $\alpha < \omega_1$, find a condition $q_\alpha \leq p$ 
    deciding the value of $\dot{c}(\alpha)$, say as $m_\alpha < \omega$.
    By the properties of $\vec{C}$, we can find a pair $(q,m) \in \P \times \omega$ 
    and distinct $\alpha, \beta \in \lim(\omega_1)$ such that $(q_\alpha, m_\alpha) = 
    (q_\beta, m_\beta) = (q,m)$ and $\type(C_\alpha[n^*_q], C_\beta[n^*_q]) = 
    t_{q,m}$. Without loss of generality, assume that $\alpha < \beta$; the 
    argument in the other case is symmetric.

    Let $k := |q|$. Recall that $q'$ is an extension of $q$ of 
    minimal length deciding the value of $\dot{f}(j)$ for all 
    $j \leq k$, and recall the definitions of $s_{q,j}$, $n_{p,j}$, 
    $n_q^*$, and $t_{q,m}$ from the beginning of this proof. Let 
    $\ell := |q'|$, let $i := e_\beta^{-1}\{\alpha\}$, and let $s$ be an 
    extension of $q'$ such that $s(\ell) = i$. Unwinding the definitions, we 
    obtain the following facts.
    \begin{itemize}
        \item Since $s$ extends $q'$, $s$ forces that $s_j = s_{q,j}$ and 
        $n_j = n_{q,j}$ for all $j \leq k$. As a result, $s$ forces that 
        $t^{n_0}_{s_0}{}^\frown t^{n_1}_{s_1}{}^\frown \ldots{}^\frown 
        t^{n_k}_{s_k} = t_{q,m}$.
        \item Since $\type(C_\alpha[n^*_q], C_\beta[n^*_q]) = t_{q,m}$, 
        the above point implies that, for all $j \leq k$, $s$ forces that 
        $\type(C_\alpha[I_j], C_\beta[I_j]) = t^{n_j}_{s_j}$. In particular, 
        it follows that $\alpha > C_\beta(k)$.
        \item By the fact that $q'$ was chosen of minimal length, it follows 
        that $s$ forces that $\ell_k = \ell$.
        \item Since $s(\ell) = i$, $s$ forces that $e_\beta(r(\ell_k)) = 
        e_\beta(i) = \alpha$.
    \end{itemize}
    Altogether, the four points above imply that $s \Vdash \alpha \in N^{<}(\beta)$. 
    However, we also have $s \Vdash \dot{c}(\alpha) = \dot{c}(\beta) = m$, so 
    $s$ forces that $\dot{c}$ is not a proper coloring of $\dot{G}$.
\end{proof}

\section{Type guessing and the Proper Forcing Axiom} \label{pfa_sect}

In this section, we show that $\mathsf{DTG}$ is not a theorem of $\mathsf{ZFC}$, 
as a strong negation of it follows from the Proper Forcing Axiom, $\mathsf{PFA}$ (in fact, from 
$\mathsf{PFA}(\omega_1)$). Recall that $\mathsf{PFA}(\omega_1)$ is the assertion 
that, for every proper forcing poset $\P$ of cardinality $\omega_1$ and every 
collection $\{D_\alpha \mid \alpha < \omega_1\}$ of dense subsets of $\P$, 
there is a filter $G \subseteq \P$ such that, for all $\alpha < \omega_1$, 
$G \cap D_\alpha \neq \emptyset$. Unlike the full $\mathsf{PFA}$, 
$\mathsf{PFA}(\omega_1)$ has no large cardinal strength and can be forced over 
any model of $\mathsf{ZFC}$ (this follows from, e.g., Lemmas 2.4 and 2.5 of 
Chapter VIII of \cite{shelah1998}; see also \cite{dobrinen2023}).

Before we state the main result of this section, we recall the definition 
of \emph{strongly proper} forcing.

\begin{definition}
    Suppose that $\P$ is a forcing poset. 
    \begin{enumerate}
        \item Given a set $M$, a condition $q \in \P$ is 
        \emph{strongly $(M,\P)$-generic} if, for all $r \leq p$, there is a condition 
        $r|M \in M \cap \P$ such that every extension of $r|M$ in $M 
        \cap \P$ is compatible with $r$ in $\P$.
        \item $\P$ is \emph{strongly proper} if, for all sufficiently large 
        regular cardinals $\theta$, there is $x \in H(\theta)$ such that, 
        for every countable $M \prec (H(\theta), \in, x)$ and every 
        $p \in M \cap \P$, there is a strongly $(M,\P)$-generic condition 
        $q \leq p$.
    \end{enumerate}
\end{definition}

\begin{theorem} \label{pfa_thm}
    Suppose that $\pfa(\omega_1)$ holds and $\vec{t} = \langle t_k \mid k < \omega \rangle$ is a 
    sequence of disjoint types such that $\sup\{\depth(t_k) \mid k < \omega\} = \omega$. 
    Then there does not exist a $\vec{t}$-guessing sequence.
\end{theorem}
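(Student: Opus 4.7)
The plan is: assuming $\pfa(\omega_1)$ and fixing any $C$-sequence $\vec{C}$, I will define a proper forcing $\P = \P_{\vec{C}, \vec{t}}$ of cardinality $\omega_1$ that generically adds a function $f : \omega_1 \to \omega$ witnessing that $\vec{C}$ is not a $\vec{t}$-guessing sequence. Applying $\pfa(\omega_1)$ to the $\omega_1$ dense sets $D_\alpha := \{p \in \P \mid \alpha \in \dom(f_p)\}$ for $\alpha < \omega_1$ then produces such an $f$ in the ground model. The required property of $f$ is that, writing $n_k := \width(t_k)$, for no pair $\alpha < \beta$ in $\lim(\omega_1)$ and $k < \omega$ do we have $f(\alpha) = f(\beta) = k$ and $\type(C_\alpha[n_k], C_\beta[n_k]) \in \{t_k, \bar{t}_k\}$.

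Conditions in $\P$ are pairs $p = (f_p, \mathcal{N}_p)$, where $f_p$ is a finite partial function $\omega_1 \to \omega$ satisfying the no-guess prohibition above on pairs in $\dom(f_p)$, and $\mathcal{N}_p$ is a finite $\in$-chain of countable elementary submodels of $H(\theta)$ (for some fixed large regular $\theta$) containing $\vec{C}$ and $\vec{t}$. The extension order $p' \leq p$ demands $f_{p'} \supseteq f_p$, $\mathcal{N}_{p'} \supseteq \mathcal{N}_p$, together with a coherence constraint: for every $N \in \mathcal{N}_p$ and every $\beta \in \dom(f_{p'}) \setminus \dom(f_p)$ with $\beta > \delta_N := \sup(N \cap \omega_1)$, the color $k := f_{p'}(\beta)$ must satisfy $\depth(t_k) > |C_\beta \cap \delta_N|$. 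This coherence requirement is the key use of the hypothesis $\sup \depth(t_k) = \omega$. Density of $D_\gamma$ then follows easily: only finitely many colors are blocked on $\gamma$ by the no-guess condition, and only finitely many depth lower bounds are imposed by coherence (one per $N \in \mathcal{N}_p$), so an admissible color $k$ exists by the depth hypothesis.

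Properness is established by the standard side-condition argument. For countable $M \prec H(\theta)$ containing the parameters and $p \in M \cap \P$, I claim that $q := (f_p, \mathcal{N}_p \cup \{M\})$ is $(M, \P)$-generic. Given $r \leq q$ and a dense open $D \in M$, let $r \restriction M := (f_r \restriction \delta, \mathcal{N}_r \cap M)$, where $\delta := M \cap \omega_1$; this lies in $M \cap \P$, so by elementarity there is $d \in D \cap M$ with $d \leq r \restriction M$. The remaining task is to verify that $d \cup r := (f_d \cup f_r, \mathcal{N}_d \cup \mathcal{N}_r)$ is a valid condition extending both $d$ and $r$. Checking that $\mathcal{N}_d \cup \mathcal{N}_r$ remains an $\in$-chain and that the coherence requirement is preserved is routine (since $\mathcal{N}_d \subseteq M$ and $M \in \mathcal{N}_r$, and since $|C_\beta \cap \delta_N| \leq |C_\beta \cap \delta|$ for any $N \in \mathcal{N}_d$).

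The main obstacle, which is really the crux of the whole argument, is verifying that $f_d \cup f_r$ satisfies the no-guess prohibition. The only potentially new conflicts pair some $\alpha \in \dom(f_d) \cap \delta$ with some $\beta \in \dom(f_r) \setminus \delta$ such that $f_d(\alpha) = f_r(\beta) = k$. For $\beta > \delta$, the coherence constraint (using $M \in \mathcal{N}_q \subseteq \mathcal{N}_r$) forces $\depth(t_k) > |C_\beta \cap \delta|$. If we had $\type(C_\alpha[n_k], C_\beta[n_k]) = t_k$ with, say, $C_\alpha[n_k] < C_\beta(\depth(t_k))$, then unpacking the depth definition shows $C_\beta(\depth(t_k) - 1) \leq C_\alpha(n_k - 1) < \delta$, so the first $\depth(t_k)$ elements of $C_\beta$ lie below $\delta$, contradicting the depth bound; crucial here is that $C_\beta$ has order type $\omega$ and $\beta > \delta$, so $\delta$ cannot be an accumulation point of $C_\beta$ and $|C_\beta \cap \delta|$ is finite. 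The symmetric case involving $\bar{t}_k$ is analogous. The boundary case $\beta = \delta$ requires additional bookkeeping — most cleanly handled by modifying the forcing so that the ordinals of the form $\delta_N$ (for $N \in \mathcal{N}_p$) are ``colored at entrance'' from a reserved subset of $\omega$ disjoint from colors permitted below $\delta_N$, preventing cross-gap conflicts at the boundary — but the core combinatorial mechanism is the depth argument above, which is the essential content of the theorem.
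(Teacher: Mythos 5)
Your overall strategy coincides with the paper's: a finite-condition (strongly) proper forcing whose side conditions mark ``barriers'' $\delta$, a coherence requirement forcing colors used at points above a barrier to have depth exceeding $|C_\beta \cap \delta|$, a depth computation showing that such a $\beta$ cannot form a guessing pair with any $\alpha$ below the barrier, and an application of $\pfa(\omega_1)$ to the dense sets $D_\alpha$. The depth computation you give is indeed the crux and is correct (including the point that $|C_\beta\cap\delta|$ is finite when $\beta>\delta$ because $\ot(C_\beta)=\omega$), and your density argument is fine.

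The gap is the boundary case $\beta = \delta_M$, which you flag but do not actually resolve, and it cannot be waved away: $\delta_M \in \lim(\omega_1)$ must eventually receive a color (density of $D_{\delta_M}$), $|C_{\delta_M} \cap \delta_M|$ is infinite so no depth requirement can protect it, and $C_{\delta_M}[n_k]$ is a finite subset of $M$, so a condition $d \in M$ meeting an arbitrary dense set can perfectly well contain an $\alpha < \delta_M$ with $f_d(\alpha) = f_r(\delta_M) = k$ and $\type(C_\alpha[n_k], C_{\delta_M}[n_k]) \in \{t_k, \bar{t}_k\}$. Your proposed fix---reserving colors for ordinals of the form $\delta_N$---does not obviously work: whether an ordinal is a $\delta_N$ is not an absolute property but depends on the side conditions of a particular condition, and, more seriously, two barrier ordinals carrying the same reserved color are themselves unprotected by the depth mechanism and can form a guessing pair. (The same boundary point also threatens preservation of your coherence requirement under amalgamation: models $N \in \mathcal{N}_d$ not already in $r\restriction M$ impose $\depth(t_{k^*}) > |C_{\delta_M} \cap \delta_N|$ on the \emph{old} point $\delta_M$, and nothing guarantees this.) The paper's resolution is the missing ingredient: its requirement (4) has a disjunct (b) permitting a $\beta$ above a barrier $\delta$ to carry a shallow color $k$ provided some $\alpha \le \delta$ in the domain has the same color and $C_\alpha[n_k] = C_\beta[n_k]$; correspondingly, when defining $r|M$ one uses elementarity to find $\bar{\delta} < \delta_M$ above everything in $r$ below $\delta_M$ with $C_{\bar{\delta}}[n_{k^*}] = C_{\delta_M}[n_{k^*}]$ and puts $(\bar{\delta}, k^*)$ into $f_{r|M}$. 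Any conflict between a new $\alpha \in \dom(f_d)$ and $\delta_M$ (or a point above $\delta_M$ protected via disjunct (b)) is then literally a conflict between $\alpha$ and $\bar{\delta}$ inside the single condition $d$, which is impossible. You would need to incorporate a device of this sort to close the argument.
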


\begin{proof}
    Fix a $C$-sequence $\vec{C} = \langle C_\alpha \mid \alpha \in \lim(\omega_1) \rangle$. 
    We will define a forcing notion $\P$ of size $\omega_1$, prove that $\P$ is (strongly) proper, and then apply 
    $\pfa(\omega_1)$ to yield a function $f:\lim(\omega_1) \rightarrow \omega$ witnessing that 
    $\vec{C}$ is not a $\vec{t}$-guessing sequence. 
    
    Let $\langle (d_k, n_k) \mid 
    k < \omega \rangle$ be such that, for all $k < \omega$, we have 
    $d_k = \depth(t_k)$ and $n_k = \width(t_k)$.
    Conditions in $\P$ are all pairs $p = (x_p, f_p)$ such that
    \begin{enumerate}
        \item $x_p \in [\omega_1]^{<\omega}$;
        \item $f_p$ is a finite partial function from $\lim(\omega_1)$ to $\omega$;
        \item for all $\alpha < \beta$, both in $\dom(f_p)$, if 
        $f_p(\alpha) = f_p(\beta) = k$, then 
        \[
          \type(C_\alpha[n_k], C_\beta[n_k]) \notin \{t_k, \bar{t}_k\};
        \]
        \item for all $\delta \in x_p$ and $\beta \in \dom(f_p) \setminus (\delta + 1)$, 
        if $k = f_p(\beta)$, then either
        \begin{enumerate}
            \item $d_k > |C_\beta \cap \delta| + 1$; or 
            \item there is $\alpha \in \dom(f_p) \cap (\delta + 1)$ such that 
            $f_p(\alpha) = k$ and $C_\alpha[n_k] = C_\beta[n_k]$.
        \end{enumerate}
    \end{enumerate}
    If $p,q \in \P$, then $q \leq p$ if and only if $x_q \supseteq x_p$ and 
    $f_q \supseteq f_p$.

    We first establish the following simple claim.

    \begin{claim}
        For all $\alpha \in \lim(\omega_1)$, the set $D_\alpha := \{p \in \P \mid 
        \alpha \in \dom(f_p)\}$ is dense in $\P$.
    \end{claim}

    \begin{proof}
        Fix $\alpha \in \lim(\omega_1)$ and $p \in \P$; we will find $q \leq p$ in $D_\alpha$. 
        To avoid triviality, assume that $p \notin D_\alpha$. Let 
        $\delta := \max(x_p \cap \alpha)$ (or $\delta = 0$ if $x_p \cap \alpha = \emptyset$), 
        and use the assumption about $\vec{t}$ to find $k < \omega$ such that
        \begin{itemize}
            \item $k \notin \mathrm{range}(f_p)$; and
            \item $d_k > |C_\alpha \cap \delta| + 1$.
        \end{itemize}
        Define $q \leq p$ by letting $x_q = x_p$ and $f_q = f_p \cup \{(\alpha, k)\}$. It 
        is routine to verify that $q \in \P$, $q \leq p$, and $q \in D_\alpha$.
    \end{proof}

    We now show that $\P$ is strongly proper. To this end, fix a sufficiently large 
    regular cardinal $\theta$, a countable elementary substructure 
    $M \prec (H(\theta), \in, \P, \vec{C}, \vec{t})$, and a condition $p \in M \cap \P$. Let 
    $\delta_M := M \cap \omega_1$, and define $q \leq p$ by letting $x_q = 
    x_p \cup \{\delta_M\}$ and $f_q = f_p$. 
    
    We claim that $q$ is strongly $(M,\P)$-generic.
    To see this, let $r \leq q$ be arbitrary. We must find a condition 
    $r | M \in M \cap \P$ such that, for any $s \leq r | M$ in $M \cap \P$, 
    $s$ is compatible with $r$. By extending $r$ if necessary, we may assume that 
    $\delta_M \in \dom(f_r)$; let $k^* = f_r(\delta_M)$.
    Let $\gamma := \max((x_r \cup \dom(f_r)) \cap \delta_M)$ 
    (or $\gamma = 0$ if $(x_r \cup \dom(f_r)) \cap \delta_M = \emptyset$). By elementarity 
    of $M$, there exists $\bar{\delta}$ such that $\gamma < \bar{\delta} < \delta_M$ 
    and $C_{\bar{\delta}}[n_{k^*}] = C_{\delta_M}[n_{k^*}]$.
    Define $r|M$ by letting $x_{r|M} = x_r \cap M$ and $f_{r|M} = (f_r \cap M) \cup 
    \{(\bar{\delta}, k^*)\}$. The fact that $r|M \in \P$ follows immediately from 
    the fact that $r \in \P$, $f_{r|M}(\bar{\delta}) = f_r(\delta_M) = k^*$, and 
    $C_{\bar{\delta}}[n_{k^*}] = C_{\delta_M}[n_{k^*}]$.

    Now suppose that $s \leq r|M$, with $s \in M \cap \P$. To show that $s$ and $r$ are 
    compatible, it suffices to show that $(x_s \cup x_r, f_s \cup f_r) \in \P$. 
    Item (1) in the definition of $\P$ is immediate, and item (2) follows from 
    the fact that $f_s \supseteq f_{r|M} \supseteq f_r \cap M$. Let us now verify 
    item (3). Because $r$ and $s$ are each in $\P$, it suffices to consider pairs 
    $\alpha < \beta$ such that $\alpha \in \dom(f_s)$ and $\beta \in \dom(f_r) 
    \setminus \delta_M$. Fix such $\alpha < \beta$, and suppose that 
    $f_s(\alpha) = f_r(\beta) = k$. Suppose first that $\beta = \delta_M$, so $k = k^*$. 
    Then, since $f_s(\bar{\delta}) = k^*$ and $C_{\bar{\delta}}[n_{k^*}] = C_{\delta_M}[n_{k^*}]$, 
    it follows from the fact that $s$ is a condition that
    \[
      \type(C_\alpha[n_{k^*}], C_{\delta_M}[n_{k^*}]) = 
      \type(C_\alpha[n_{k^*}], C_{\bar{\delta}}[n_{k^*}]) \notin \{t_k, \bar{t}_k\}.
    \]

    Next suppose that $\beta > \delta_M$. Since $\delta_M \in x_q \subseteq x_r$ and 
    $r$ satisfies requirement (4) in the definition of $\P$, we are in one of two cases. 
    If there is $\beta' \in \dom(f_r) \cap (\delta_M + 1)$ such that 
    $f_r(\beta') = k$ and $C_{\beta'}[n_k] = C_\beta[n_k]$, then we can reach the desired 
    conclusion exactly as in the case in which $\beta = \delta_M$. So assume now 
    that $d_k > |C_\beta \cap \delta_M| + 1$. In particular, 
    $C_\beta(d_k - 1) \geq \delta_M$, so if it were the case that 
    \[
      \type(C_\alpha[n_k], C_\beta[n_k]) \in \{t_k, \bar{t}_k\},
    \]
    then it would need to be the case that $\alpha > \delta_M$. Since $\alpha < \delta_M$, 
    we again reach our desired conclusion.

    We finally verify item (4). Again since $r$ and $s$ are both in $\P$, it suffices to 
    consider pairs $\delta \in x_s$ and $\beta \in \dom(f_r) \setminus \delta_M$. 
    Fix such a $\delta$ and $\beta$, and let $k = f_r(\beta)$. Suppose first that 
    $\beta = \delta_M$, and hence $k = k^*$. If $\delta < \bar{\delta}$, then applying 
    requirement (4) to $\delta$ and $\bar{\delta}$ in the condition $s$ yields the 
    desired conclusion. If $\bar{\delta} \leq \delta$, then $\bar{\delta}$ is a witness to 
    option (b) in requirement (4). 

    Suppose now that $\beta > \delta_M$. Applying requirement (4) to $\delta_M$ and 
    $\beta$ in $r$ yields one of two options. If option (b) of requirement (4) holds, 
    then we can proceed exactly as in the case in which $\beta = \delta_M$ to reach our 
    desired conclusion. If, on the other hand, $d_k > |C_\beta \cap \delta_M| + 1$, then the 
    fact that $\delta < \delta_M$ immediately yields $d_k > |C_\beta \cap \delta| + 1$, and 
    we are done. This completes the verification that $s$ and $r$ are compatible, and hence 
    the proof that $\P$ is strongly proper.

    To complete the proof of the theorem, apply $\pfa(\omega_1)$ to the poset $\P$ and the dense 
    open sets $\{D_\alpha \mid \alpha \in \lim(\omega_1)\}$ to yield a filter $G \subseteq \P$ such 
    that, for all $\alpha \in \lim(\omega_1)$, $G \cap D_\alpha \neq \emptyset$. 
    Let $f = \bigcup \{f_p \mid p \in G\}$. Then $f : \lim(\omega_1) \rightarrow \omega$ witnesses that 
    $\vec{C}$ is not a $\vec{t}$-guessing sequence.
\end{proof}

\section{Open questions} \label{q_sec}

We conclude with a few questions that remain open. First, note 
that, in Section \ref{type_sec}, we show that disjoint type 
guessing for sequences of bounded width is a theorem of $\zfc$, 
while in Section \ref{pfa_sect} we showed that disjoint type 
guessing can consistently fail for sequences of unbounded depth. 
It remains unclear what the situation is for sequences of unbounded 
width but bounded depth. In particular, we ask the following question.

\begin{question}
  Suppose that $\vec{t} = \langle t_k \mid k < \omega \rangle$ is a 
  sequence of disjoint types such that $\sup\{\depth(t_k) \mid k < 
  \omega\} < \omega$. Must there exist a $\vec{t}$-guessing sequence?
\end{question}

Next, we do not know if $\mathsf{DTG}$ is necessary for the 
conclusion of Theorem \ref{growth_rate_thm}.

\begin{question}
  Must it be the case that, in the forcing extension by a single 
  Cohen real, the following statement is true: For every function 
  $f:\omega \to \omega$, there is an HM graph $G$ such that 
  $f_G(k) > f(k)$ for all $3 \leq k < \omega$?
\end{question}

Our final question concerns a formal strengthening of the notion of 
an HM graph. Let us say that a graph $G$ on $\omega_1$ is a 
\emph{regressive HM graph} if it is an HM graph and, moreover, 
there are no proper colorings $c:\omega_1 \to \omega_1$ of $G$ 
such that $c(\alpha) < \alpha$ for all $\alpha \in \lim(\omega_1)$.
All of the methods of constructing HM graphs that we examined 
can be modified to yield regressive HM graphs; we ask whether 
the existence of regressive HM graphs is actually a stronger 
statement than the existence of HM graphs.

\begin{question}
    Suppose there is an HM graph. Must there exist a regressive 
    HM graph?
\end{question}

\bibliographystyle{plain}
\bibliography{bib}

\end{document}